\newtheorem{theorem}{Theorem}[section]
\newtheorem{lemma}[theorem]{Lemma}
\newtheorem{corollary}[theorem]{Corollary}
\theoremstyle{definition}
\newtheorem{definition}[theorem]{Definition}
\newtheorem{proposition}[theorem]{Proposition}
\theoremstyle{remark}
\numberwithin{equation}{section}
\begin{document}

\title{An Optimal Sobolev Embedding for $L^1$}


\author[D. Spector]{Daniel Spector}
\address{
Daniel Spector\hfill\break\indent
National Chiao Tung University\hfill\break\indent
Department of Applied Mathematics\hfill\break\indent
Hsinchu, Taiwan}
\address{
National Center for Theoretical Sciences\hfill\break\indent 
National Taiwan University\hfill\break\indent
No. 1 Sec. 4 Roosevelt Rd.\hfill\break\indent 
Taipei, 106, Taiwan}
\address{
Washington University in St. Louis  \hfill\break\indent 
Department of Mathematics and Statistics\hfill\break\indent
One Brookings Drive\hfill\break\indent 
St. Louis, MO}
\email{dspector@math.nctu.edu.tw}
\thanks{D.S. is supported in part by the Taiwan Ministry of Science and Technology under research grants 105-2115-M-009-004-MY2, 107-2918-I-009-003 and 107-2115-M-009-002-MY2.}


\subjclass[2010]{Primary }

\date{}

\dedicatory{}

\commby{}

\begin{abstract}
In this paper we establish an optimal Lorentz space estimate for the Riesz potential acting on curl-free vectors:  There is a constant $C=C(\alpha,d)>0$ such that
\[
\|I_\alpha F \|_{L^{d/(d-\alpha),1}(\mathbb{R}^d;\mathbb{R}^d)} \leq C \|F\|_{L^1(\mathbb{R}^d;\mathbb{R}^d)}
\]
for all fields $F \in L^1(\mathbb{R}^d;\mathbb{R}^d)$ such that $\operatorname*{curl} F=0$ in the sense of distributions.  This is the best possible estimate on this scale of spaces and completes the picture in the regime $p=1$ of the well-established results for $p>1$.
%
\end{abstract}

\maketitle
\section{Introduction}
The main result of this paper is
\begin{theorem}\label{mainresult}
Let $d\geq 2$ and $\alpha \in (0,d)$.  There exists a constant $C=C(\alpha,d)>0$ such that
\begin{align}\label{potentialnodiracl1}
\|I_\alpha F \|_{L^{d/(d-\alpha),1}(\mathbb{R}^d;\mathbb{R}^d)} \leq C \|F\|_{L^1(\mathbb{R}^d;\mathbb{R}^d)}
\end{align}
for all fields $F \in L^1(\mathbb{R}^d;\mathbb{R}^d)$ such that $\operatorname*{curl} F=0$ in the sense of distributions.
\end{theorem}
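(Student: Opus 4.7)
My plan is to reduce the theorem to sharp endpoint Sobolev--Lorentz estimates via a scalar potential. Since $F\in L^1$ is curl-free in the sense of distributions, the Poincar\'e lemma produces a scalar $u$ with $\nabla u=F$; normalizing $u$ to vanish at infinity, Alvino's endpoint Lorentz--Sobolev inequality supplies $\|u\|_{L^{d/(d-1),1}} \le C\|\nabla u\|_{L^1} = C\|F\|_{L^1}$. Because $I_\alpha$ is a Fourier multiplier commuting with $\nabla$, and $\nabla=-R(-\Delta)^{1/2}$ for the vector Riesz transform $R=(R_1,\dots,R_d)$, a direct computation gives
\[
I_\alpha F \;=\; \nabla I_\alpha u \;=\; -R\,(-\Delta)^{(1-\alpha)/2}u \;=\; -R\,I_{\alpha-1}u.
\]
Since the Riesz transforms are bounded on every Lorentz space $L^{p,q}$ with $1<p<\infty$, the theorem reduces to the scalar bound $\|I_{\alpha-1}u\|_{L^{d/(d-\alpha),1}} \le C\|F\|_{L^1}$.

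In the range $\alpha\in[1,d)$ the operator $I_{\alpha-1}$ is a Riesz potential of non-negative order, and the Hardy--Littlewood--Sobolev inequality on Lorentz spaces provides the boundedness $I_{\alpha-1}\colon L^{d/(d-1),1}\to L^{d/(d-\alpha),1}$; combined with the Alvino estimate for $u$ this closes this range.

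The harder range is $\alpha\in(0,1)$, where $I_{\alpha-1}=(-\Delta)^{(1-\alpha)/2}$ is a fractional \emph{derivative} of order $1-\alpha$, and what is needed is the endpoint fractional Sobolev--Lorentz inequality
\[
\|(-\Delta)^{(1-\alpha)/2}u\|_{L^{d/(d-\alpha),1}} \;\le\; C\,\|\nabla u\|_{L^1}.
\]
I would attack this by adapting Alvino's coarea and rearrangement argument to the fractional setting, using the pointwise singular-integral representation of $(-\Delta)^{(1-\alpha)/2}u$ together with level-set estimates that are sharp in the Lorentz second index. This is the main obstacle: naive complex interpolation between the Alvino endpoint at $\alpha=1$ and a scaling-compatible second endpoint yields only the weaker target $L^{d/(d-\alpha),1/\alpha}$, so an argument that sees the curl-free cancellation beyond the scalar norm of $u$ seems to be required.
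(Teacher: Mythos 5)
Your reduction is correct as far as it goes, and it mirrors the paper's own strategy: pass to a scalar potential $u$ with $\nabla u = F$, invoke Alvino's endpoint estimate $\|u\|_{L^{d/(d-1),1}} \lesssim \|\nabla u\|_{L^1}$, dispatch $\alpha \ge 1$ by composing with O'Neil's Young inequality and Riesz-transform boundedness on Lorentz spaces, and isolate the range $\alpha \in (0,1)$ as the crux. You also correctly identify the target inequality $\|(-\Delta)^{(1-\alpha)/2}u\|_{L^{d/(d-\alpha),1}} \lesssim \|\nabla u\|_{L^1}$ and the reason naive interpolation fails (the constant blows up as the second Lorentz index tends to $1$). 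All of this agrees with the paper.

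The gap is that you stop exactly at the theorem's real content. ``Adapting Alvino's coarea and rearrangement argument to the fractional setting'' is the correct instinct, but it is a plan, not a proof, and the mechanism that makes it work is nontrivial. The paper's argument for $\alpha \in (0,1)$ runs as follows. One first proves the Maz'ya--Shaposhnikova pointwise interpolation bound
\begin{align*}
\mathcal{D}^{1-\alpha}(u)(x) := \int_{\mathbb{R}^d} \frac{|u(x)-u(y)|}{|x-y|^{d+1-\alpha}}\,dy \;\lesssim\; \bigl(\mathcal{M}(|\nabla u|)(x)\bigr)^{1-\alpha}\bigl(\mathcal{M}(u)(x)\bigr)^{\alpha},
\end{align*}
which dominates $|I_\alpha \nabla u|$ pointwise. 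Feeding this into Lorentz H\"older plus maximal-function bounds gives
\begin{align*}
\|I_\alpha \nabla u\|_{L^{d/(d-\alpha),1}} \;\lesssim\; \|\nabla u\|_{L^1}^{1-\alpha}\,|||u|||_{L^{d/(d-1),\alpha}}^{\alpha},
\end{align*}
and the second factor is not controlled by $\|\nabla u\|_{L^1}$ for general $u$ because $\alpha < 1$. The decisive step, which your sketch does not contain, is the reduction to characteristic functions via the coarea formula: write $u = \int_0^\infty \chi_{\{u>t\}}\,dt$, apply Minkowski's inequality for integrals, and observe that for $u = \chi_E$ the offending quantity $|||\chi_E|||_{L^{d/(d-1),\alpha}}$ equals (up to a fixed constant) $|E|^{1-1/d}$ regardless of the second Lorentz index. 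This collapses the estimate to $Per(E)^{1-\alpha}|E|^{\alpha(1-1/d)}$ per level set, and H\"older in $t$ plus coarea reassemble $\|\nabla u\|_{L^1}^{1-\alpha}\|u\|_{L^{d/(d-1),1}}^{\alpha}$, which Alvino then closes. Note also that your closing remark --- that one needs to exploit curl-free cancellation beyond what the scalar $u$ records --- is off target: the proof works with the \emph{positive} operator $\mathcal{D}^{1-\alpha}$, which is strictly larger than $|I_\alpha \nabla u|$, so no cancellation is used; what saves the day is the $r$-independence of the Lorentz quasi-norm on indicators, not an oscillation effect.
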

 Here $L^{d/(d-\alpha),1}(\mathbb{R}^d;\mathbb{R}^d)$ denotes the space of vector-valued functions whose Euclidean norm is in the Lorentz space $L^{d/(d-\alpha),1}(\mathbb{R}^d)$ (see below in Section \ref{preliminaries} for a precise definition of this space) and $I_\alpha$ is the Riesz potential, defined for measurable functions in the scalar setting by the formula
\begin{align*}
I_\alpha f(x) = \frac{1}{\gamma(\alpha)}\int_{\mathbb{R}^d} \frac{f(y)}{|x-y|^{d-\alpha}}\;dy,
\end{align*}
with an analogous definition in the vector setting by operating on components (see Section \ref{preliminaries} for the definition of the constant $\gamma(\alpha)$).  

As it may be of interest, let us also record two equivalent formulations of the inequality \eqref{potentialnodiracl1} before discussing the literature, our proof, some extensions, and a dual result.  In particular, taking into account the curl-free condition, the inequality \eqref{potentialnodiracl1} can alternatively be expressed as 
\begin{align}\label{gradientinequality}
\| I_\alpha \nabla u \|_{L^{d/(d-\alpha),1}(\mathbb{R}^d;\mathbb{R}^d)} \leq C\| \nabla u \|_{L^1(\mathbb{R}^d;\mathbb{R}^d)}
\end{align}
for all $u \in \dot{W}^{1,1}(\mathbb{R}^d)$ (which can be argued via Lemma 1 in \cite{BonamiPoornima}).  Such an estimate then extends to $\dot{BV}(\mathbb{R}^d)$ by density in the strict topology (and in turn one can also assert an analogue of \eqref{potentialnodiracl1} for measures).  Meanwhile the boundedness of the Riesz transforms on the Lorentz spaces imply that both \eqref{potentialnodiracl1} and \eqref{gradientinequality} are equivalent to  
\begin{align}\label{l1typeestimate}
\| I_\alpha f \|_{L^{d/(d-\alpha),1}(\mathbb{R}^d)} \leq C'\| R f \|_{L^1(\mathbb{R}^d;\mathbb{R}^d)}
\end{align}
for all distributions $f \in \mathcal{D}^\prime(\mathbb{R}^d)$ with $Rf:= \nabla I_1 f \in L^1(\mathbb{R}^d;\mathbb{R}^d)$.

Theorem \ref{mainresult} completes the picture concerning the study of the mapping properties of the Riesz potential on $L^p(\mathbb{R}^d)$ into Lorentz spaces for $1\leq p <\frac{d}{\alpha}$.  We recall that it was Sobolev who had initiated the study on the scale of Lebesgue spaces in \cite{sobolev}, where he demonstrated that one has the existence of a constant $\tilde{C}=\tilde{C}(\alpha,d)>0$ such that
\begin{align}\label{sobolev}
\|I_\alpha f \|_{L^q(\mathbb{R}^d)} \leq \tilde{C} \|f\|_{L^p(\mathbb{R}^d)}
\end{align}
for all $f \in L^p(\mathbb{R}^d)$, provided $1<p<d/\alpha$ and where
\[\frac{1}{q}=\frac{1}{p}-\frac{\alpha}{d}.\]
Subsequent work by O'Neil \cite{oneil} then showed that for the same range of $p$ and corresponding definition of $q$ one has an improvement to this inequality on the Lorentz scale, the inequality 
\begin{align}\label{oneil}
\|I_\alpha f \|_{L^{q,p}(\mathbb{R}^d)} \leq \tilde{C}' \|f\|_{L^p(\mathbb{R}^d)}
\end{align}
for some $\tilde{C}'>0$ and for all $f \in L^p(\mathbb{R}^d)$.  Recall that $L^q(\mathbb{R}^d) = L^{q,q}(\mathbb{R}^d)$, while spaces $L^{q,r}(\mathbb{R}^d)$ are nested increasing with respect to the second parameter.  The fact that $p<q$ thus implies that inequality \eqref{oneil} improves \eqref{sobolev}, while simple examples show that it is the best possible result on this scale.

It is well-known that \eqref{sobolev} (and hence \eqref{oneil}) cannot hold for $p=1$, though one has various possible replacements.  A classical result to this effect is the weak-type estimate of Zygmund \cite{zygmund}:  There exists $\tilde{C}''>0$ such that
\begin{align*}
|\{x: |I_\alpha f(x) | >t\}|^{(d-\alpha)/d} \leq \frac{\tilde{C}''}{t} \|f\|_{L^1(\mathbb{R}^d)}
\end{align*}
for all $t>0$ and all $f \in L^1(\mathbb{R}^d)$.  Here while the standard counterexample (cf. \cite{Sharmonic}, p.~119) shows that one cannot obtain a strong-type inequality with only the assumption $f \in L^1(\mathbb{R}^d)$, Stein and Weiss \cite{SteinWeiss} have shown that for $f$ in the Hardy space 
$\mathcal{H}^1(\mathbb{R}^d)$, one can obtain such a bound:  There exists $\tilde{C}'''>0$ such that
\begin{align*}
  \|I_\alpha f \|_{L^{d/(d-\alpha)}(\mathbb{R}^d;\mathbb{R}^d)} \leq \tilde{C}'''\int_{\mathbb{R}^d} \left|\left(f(x),Rf(x)\right)\right|\;dx
\end{align*}
for all $f \in \mathcal{H}^1(\mathbb{R}^d)$.  Observe here that we take as our definition of the Hardy space 
  \begin{align*}
 \mathcal{H}^1(\mathbb{R}^d):= \{ f \in L^1(\mathbb{R}^d) : Rf = \nabla I_1f \in L^1(\mathbb{R}^d;\mathbb{R}^d)\},
 \end{align*}
 though one has other possible definitions, for example, in terms of maximal functions \cite{FeffermanStein} or via an atomic decomposition \cite{Coifman,Latter}.  As Tartar has shown in \cite{tartar} that the Riesz potential maps atoms into the Lorentz space $L^{d/(d-\alpha),1}(\mathbb{R}^d)$, one can thus improve\footnote{Commenting on an earlier version of this manuscript, Mario Milman communicated to us a simple proof of this fact using the interpolation theory of Hardy spaces developed in \cite{FeffermanRiviereSagher}.} the preceding inequality to the optimal target on the Lorentz scale.
 
Yet while the assumption that both $f \in L^1(\mathbb{R}^d)$ and $Rf \in L^1(\mathbb{R}^d;\mathbb{R}^d)$ is sufficient to obtain a bound on the potential of $f$ in the suitably scaling Lebesgue space, it is not necessary, as been shown in recent work by the author, Armin Schikorra and Jean Van Schaftingen in \cite{SSVS}, where the following inequality was proven:  There exists a constant $C''=C''(\alpha,d)>0$ such that
\begin{align}\label{potentialnodirac}
\|I_\alpha f \|_{L^{d/(d-\alpha)}(\mathbb{R}^d)} \leq C'' \|Rf\|_{L^1(\mathbb{R}^d;\mathbb{R}^d)}
\end{align}
for all distributions $f \in \mathcal{D}^\prime(\mathbb{R}^d)$ such that $Rf\in L^1(\mathbb{R}^d;\mathbb{R}^d)$.  A comparison with the result of Tartar \cite{tartar} prompts one to wonder whether the inequality \eqref{potentialnodirac} can be strengthened on the Lorentz scale.  Indeed it can, as one sees from the formulation of Theorem \ref{mainresult} as the inequality \eqref{l1typeestimate} that one has precisely such an improvement.

As was remarked in \cite{SSVS}, one could already have deduced the inequality \eqref{potentialnodirac} from various embeddings in the literature which have been known for some time, e.g. \citelist{\cite{Solonnikov1975}*{Theorem 2}\cite{Kolyada}*{Theorem 
4}\cite{CDDD}*{Theorem 1.4}\cite{BourgainBrezisMironescu}*{Lemma 
D.2}\cite{mazya}\cite{VanSchaftingen2013}*{Theorem 8.3}}).  In fact, as was shown in \cite{SSVS}, one can even replace the norm of $I_\alpha f$ in $L^{d/(d-\alpha)}(\mathbb{R}^d)$ on the left-hand-side with its norm in $L^{d/(d-\alpha),r}(\mathbb{R}^d)$ for any $r>1$.  However, the constant in the theorem then depends upon $r$ and is not stable as $r\to 1^+$, and so one cannot obtain the optimal Lorentz space embedding with this argument.  Thus we can highlight the main achievements of Theorem \ref{mainresult}: to obtain the second parameter $r=1$ in the Lorentz space, to do so without the assumption $f \in \mathcal{H}^1(\mathbb{R}^d)$, and to accomplish these two feats for $\alpha \in (0,1)$.  Let us comment on these several facts here.  First, let us notice that to retain $r=1$ is significant, since only for $r=1$ does one have the embedding
\begin{align*}
I_\alpha : L^{d/\alpha,r}(\mathbb{R}^d) \to L^\infty(\mathbb{R}^d),
\end{align*}
(and even the space of continuous functions) as for any $r>1$ one obtains an embedding into the space of functions of bounded mean oscillation (and even a slightly better estimate involving local exponential integrability).  Second, the assumptions on $F$ in our Theorem \ref{mainresult} do not imply the underlying function $f=\operatorname*{div} I_1F  \in \mathcal{H}^1(\mathbb{R}^d)$.  A simple way to observe this fact is the lack of validity of the inequality
\begin{align*}
\|f\|_{L^1(\mathbb{R}^d)} \leq C \|Rf\|_{L^1(\mathbb{R}^d;\mathbb{R}^d)}.
\end{align*}
It is easy to construct a counterexample to such an inequality, for example, the sequence $Rf_n= \nabla u_n$, where $u_n=  \rho_n\ast \chi_{B(0,1)}$ for $\rho_n$ a sequence of standard mollifiers.  Then the right-hand-side remains bounded while
\begin{align*}
\int_{\mathbb{R}^d} |f_n(x)|\;dx = \int_{\mathbb{R}^d} |(-\Delta)^{1/2} u_n(x)|\;dx \to \infty.
\end{align*}
In particular, this construction exploits the fact that $(-\Delta)^{1/2}\chi_{B(0,1)}$ is a distribution whose (suitably defined) Riesz transform is a Radon measure, and not a function.  Finally regarding $\alpha \in (0,1)$:  Once one has established the validity of such an inequality for some $\alpha>0$, the result follows for all $\alpha'>\alpha$ from a vector-valued analogue of \eqref{oneil}.   As the case $\alpha=1$ can be deduced as a consequence of the result of Alvino \cite{Alvino}, the range $\alpha \geq 1$ follows from the existing literature.  In the sequel we therefore restrict our attention to the case $\alpha \in (0,1)$.


The idea of the proof is that while standard potential estimates are not sufficient to obtain an optimal exponent in the second parameter, the coarea formula allows for a sort of self-improvement through the estimate for characteristic funcitons.  The use of the coarea formula and isoperimetric inequalities in the proof of Sobolev inequalities in this spirit is classical \cite{FedererFleming,mazya1960}, while we here argue along the lines of a more recent work of Maz'ya \cite{mazya}.  To understand what is gained by such a reduction, let us suppose that we try to prove \eqref{gradientinequality} directly by our method, without assuming that one operates on characteristic functions.  

First, by a pointwise interpolation inequality of Maz'ya and Shaposhnikova \cite{mazyashaposhnikova} one has the following estimate:
For $\alpha \in (0,1)$, there exists a constant $C=C(\alpha,d)>0$ such that for each $u \in C^\infty(\mathbb{R}^d) \cap W^{1,1}(\mathbb{R}^d)$
\begin{align}\label{improvisedHedberg}
|I_\alpha \nabla u(x)| \leq C \left(\mathcal{M}(|\nabla u|)(x)\right)^{1-\alpha} \left(\mathcal{M}(u)(x)\right)^\alpha.
\end{align}
Next, by O'Neil's extension of H\"older's inequality in the Lorentz spaces \cite{oneil}, and moving to an equivalent quasi-norm in the Lorentz spaces (defined in terms of the distribution function, see below in Section \ref{preliminaries}), we can show one has the bound
\begin{align*}
\| I_\alpha \nabla u \|_{L^{d/(d-\alpha),1}(\mathbb{R}^d;\mathbb{R}^d)} \leq C' ||| \mathcal{M}(|\nabla u|)|||^{1-\alpha}_{L^{1,\infty}(\mathbb{R}^d)} |||\mathcal{M}(u) |||^\alpha_{L^{ d /(d-1),\alpha}(\mathbb{R}^d)}.
\end{align*}
Finally, by various weak and strong-type bounds of the Hardy-Littlewood maximal function on the Lorentz spaces one deduces
\begin{align}\label{almostoptimal}
\| I_\alpha \nabla u \|_{L^{d/(d-\alpha),1}(\mathbb{R}^d;\mathbb{R}^d)} \leq C''\| \nabla u \|^{1-\alpha}_{L^1(\mathbb{R}^d;\mathbb{R}^d)} |||u |||^\alpha_{L^{ d /(d-1),\alpha}(\mathbb{R}^d)}.
\end{align}
But as $\alpha<1$, the term $|||u |||^\alpha_{L^{ d /(d-1),\alpha}(\mathbb{R}^d)}$ is too large to be absorbed into  $\| \nabla u \|_{L^1(\mathbb{R}^d;\mathbb{R}^d)}$ for general $u$ (the case $\alpha=1$ is Alvino's result \cite{Alvino}).  

By passing to a limit in a suitable manner, however, we can obtain an analogue of \eqref{almostoptimal} for the characteristic function of a set of finite perimeter $E\subset \mathbb{R}^d$.  Here one finds that the independence of $|||\chi_E |||^\alpha_{L^{ d /(d-1),r}(\mathbb{R}^d)}$ with respect to $0<r\leq +\infty$ (up to a constant of equivalence that depends on $r$) allows one to regain the appropriate control of this term.  In fact, introducing the nonlinear fractional differential operator
\begin{align}\label{nonlineargradient}
\mathcal{D}^{1-\alpha}(u):=\int_{\mathbb{R}^d} \frac{|u(x)-u(y)|}{|x-y|^{d+1-\alpha}}\;dy,
\end{align}
defined for $u \in BV(\mathbb{R}^d)$, we can actually prove a stronger result (and easier to argue, due to positivity of the operator), the following 
\begin{lemma}\label{isoperimetricinequality}
Let $d\geq 2$ and $\alpha \in (0,1)$.  There exists a constant $C=C(\alpha,d)>0$ such that
\begin{align*}
\|\mathcal{D}^{1-\alpha}(\chi_E) \|_{L^{d/(d-\alpha),1}(\mathbb{R}^d)} \leq C Per(E)^{1-\alpha}|E|^{\alpha(1-1/d)}
\end{align*}
for all sets $E \subset \mathbb{R}^d$ of finite perimeter.
\end{lemma}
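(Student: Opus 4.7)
The plan is to adapt the chain of inequalities producing the almost-optimal bound \eqref{almostoptimal}, but for the positive operator $\mathcal{D}^{1-\alpha}$ and with input $u=\chi_E$. The obstruction flagged after \eqref{almostoptimal}---that $|||u|||_{L^{d/(d-1),\alpha}}^{\alpha}$ cannot in general be absorbed---disappears in the present setting, since a direct calculation gives $|||\chi_E|||_{L^{d/(d-1),\alpha}}\sim |E|^{(d-1)/d}$, producing exactly the factor $|E|^{\alpha(d-1)/d}=|E|^{\alpha(1-1/d)}$ of the lemma.

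The first step is to establish the analog of the Maz'ya--Shaposhnikova inequality \eqref{improvisedHedberg} for the positive operator $\mathcal{D}^{1-\alpha}$: for $u\in C_c^\infty(\mathbb{R}^d)$,
\[
\mathcal{D}^{1-\alpha}(u)(x)\leq C\bigl(\mathcal{M}|\nabla u|(x)\bigr)^{1-\alpha}\bigl(\mathcal{M}u(x)\bigr)^{\alpha}.
\]
This follows from the standard Hedberg splitting: break the integral defining $\mathcal{D}^{1-\alpha}(u)(x)$ at some radius $r>0$; the near part is bounded by $Cr^{\alpha}\mathcal{M}|\nabla u|(x)$ via the fundamental theorem of calculus along segments together with a dyadic decomposition, while the far part is bounded by $Cr^{\alpha-1}\mathcal{M}u(x)$ via the triangle inequality $|u(x)-u(y)|\leq|u(x)|+|u(y)|$ and another dyadic decomposition; then optimize in $r$. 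Positivity of the kernel is essential: no cancellation need be exploited in the far piece, which is the sense in which the derivation is easier than that of \eqref{improvisedHedberg}.

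With the pointwise bound in hand, one applies O'Neil's H\"older inequality in Lorentz spaces (via the splittings $\tfrac{d-\alpha}{d}=\tfrac{\alpha(d-1)}{d}+(1-\alpha)$ and $1=1+0$) together with the strong-type boundedness of the Hardy--Littlewood maximal function on $L^{d/(d-1),\alpha}(\mathbb{R}^d)$ and its weak $L^1$ endpoint bound (the latter applied to the measure $|D\chi_E|$) to obtain the analog of \eqref{almostoptimal} for $\mathcal{D}^{1-\alpha}$:
\[
\|\mathcal{D}^{1-\alpha}(u)\|_{L^{d/(d-\alpha),1}}\leq C\|\nabla u\|_{L^1}^{1-\alpha}\,|||u|||_{L^{d/(d-1),\alpha}}^{\alpha}.
\]
Specializing to the mollifications $u_\varepsilon=\rho_\varepsilon\ast\chi_E$ gives $\|\nabla u_\varepsilon\|_{L^1}\leq\operatorname{Per}(E)$ and $|||u_\varepsilon|||_{L^{d/(d-1),\alpha}}\leq C|E|^{(d-1)/d}$, hence the claimed estimate at the mollified level. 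One concludes by letting $\varepsilon\to 0^+$ and appealing to lower semicontinuity of the Lorentz quasi-norm under the pointwise a.e.~convergence $\mathcal{D}^{1-\alpha}(u_\varepsilon)\to\mathcal{D}^{1-\alpha}(\chi_E)$ (Fatou applied to the distribution function). The principal obstacle is this limiting step, which is standard but requires care since $\chi_E$ is only $BV$.
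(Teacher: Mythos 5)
Your proposal follows the same outline as the paper's own argument: the Maz'ya--Shaposhnikova pointwise bound $\mathcal{D}^{1-\alpha}(u)\leq C\bigl(\mathcal{M}|\nabla u|\bigr)^{1-\alpha}\bigl(\mathcal{M}u\bigr)^{\alpha}$ (Lemma \ref{mazyaslemma}), O'Neil's H\"older inequality with the split $\tfrac{d-\alpha}{d}=(1-\alpha)+\tfrac{\alpha(d-1)}{d}$, the weak-$(1,1)$ and Lorentz bounds for $\mathcal{M}$, and a mollification plus Fatou passage from $u_\varepsilon=\rho_\varepsilon\ast\chi_E$ to $\chi_E$. You leave one small gap: you assert $|||u_\varepsilon|||_{L^{d/(d-1),\alpha}}\leq C|E|^{(d-1)/d}$ uniformly in $\varepsilon$ ``by direct calculation,'' but the direct calculation only shows $|||\chi_E|||_{L^{d/(d-1),\alpha}}\sim|E|^{(d-1)/d}$ (since $\chi_E^{*}(t)=\chi_{[0,|E|]}(t)$); to transfer the bound to the mollifications one needs the interpolation estimate $|||f|||_{L^{q,r}}\lesssim\|f\|_{L^\infty}^{1-1/q}\|f\|_{L^1}^{1/q}$, applied with $\|u_\varepsilon\|_{L^\infty}\leq 1$ and $\|u_\varepsilon\|_{L^1}=|E|$. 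This is exactly what the paper's Theorem \ref{strong-type} packages --- in the sharper form with $\mathcal{M}(f)$ on the left-hand side, which also makes the separate appeal to strong-type boundedness of $\mathcal{M}$ on $L^{d/(d-1),\alpha}$ unnecessary. Two further minor slips: the weak-$(1,1)$ bound is applied to the function $|\nabla u_\varepsilon|$, not to the measure $|D\chi_E|$; and the $u_\varepsilon$ belong to $C^\infty(\mathbb{R}^d)\cap W^{1,1}(\mathbb{R}^d)$ rather than $C_c^\infty(\mathbb{R}^d)$ (a finite-perimeter set need not be bounded), which is the class in which the pointwise lemma is stated. None of this changes the substance; with these adjustments your proof is correct and coincides with the paper's.
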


As discussed in \cite{SSVS}, Theorem \ref{mainresult} does not hold in the case $d=1$, and let us take this occasion to note where the assumption $d>1$ arises in the proof of Lemma \ref{isoperimetricinequality}.  It is in the step where we use H\"older's inequality in the Lorentz spaces, where the exponents are $p=1/(1-\alpha)$ and $q=d/\alpha(d-1)$:
\begin{align*}
\frac{1}{d/(d-\alpha)} = \frac{1}{1/(1-\alpha)} +\frac{1}{d/\alpha(d-1)}.
\end{align*}
In particular, in the case $d=1$ one has $q=+\infty$ and so one cannot pass to a weak-type estimate for the Hardy-Littlewood maximal function, instead requiring a strong-type estimate  on $L^1(\mathbb{R}^d)$, which is, of course, false.

Actually, by not invoking the isoperimetric inequality, our proof in Lemma \ref{isoperimetricinequality} obtains a more general result than the equivalence of isoperimetric and Sobolev inequalities discussed in \cite{mazya}.  In particular, it implies the general interpolation inequality given in our 
\begin{theorem}\label{interpolationinequality}
Let $\alpha \in (0,1)$.  There exists a constant $C=C(\alpha,d)>0$ such that
\begin{align*}
\|I_\alpha D u \|_{L^{d/(d-\alpha),1}(\mathbb{R}^d;\mathbb{R}^d)} \leq C \|D u\|^{1-\alpha}_{L^1(\mathbb{R}^d;\mathbb{R}^d)} \|u\|_{L^{d/(d-1),1}(\mathbb{R}^d)}^\alpha
\end{align*}
for all $u \in BV(\mathbb{R}^d)$.
\end{theorem}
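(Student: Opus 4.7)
The plan is to follow the coarea strategy of Maz'ya: reduce to the characteristic-function case supplied by Lemma~\ref{isoperimetricinequality} by slicing $u$ through its level sets, then recombine via H\"older in the level parameter together with the coarea formula and the distribution-function formula for the $L^{p,1}$ Lorentz norm.

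I would first assume $u\in C^\infty_c(\mathbb{R}^d)$ and establish the pointwise bound $|I_\alpha\nabla u(x)|\le C\mathcal{D}^{1-\alpha}(u)(x)$, with $\mathcal{D}^{1-\alpha}$ as in~\eqref{nonlineargradient}. One integrates by parts the Riesz-kernel representation of $I_\alpha\nabla u$ on the set $\{|y-x|\ge\epsilon\}$, exploits the symmetry identity $\int_{|y-x|\ge\epsilon}\nabla_y|x-y|^{-(d-\alpha)}\,dy=0$ to insert the constant $u(x)$ at no cost, and sends $\epsilon\to 0$; the resulting kernel acts on $u(y)-u(x)$ and becomes absolutely integrable, while the spherical boundary contribution on $|y-x|=\epsilon$ vanishes of order $\epsilon^\alpha$ by smoothness of $u$.

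Next I apply the identity $|a-b|=\int_{-\infty}^\infty|\chi_{\{a>t\}}-\chi_{\{b>t\}}|\,dt$ to the integrand defining $\mathcal{D}^{1-\alpha}(u)(x)$ and use Fubini to obtain $\mathcal{D}^{1-\alpha}(u)(x)=\int_{-\infty}^\infty\mathcal{D}^{1-\alpha}(\chi_{E_t})(x)\,dt$ with $E_t=\{u>t\}$. Since $d/(d-\alpha)>1$, the Lorentz space $L^{d/(d-\alpha),1}(\mathbb{R}^d)$ is normable, so Minkowski's integral inequality combined with Lemma~\ref{isoperimetricinequality} yields
\[
\|\mathcal{D}^{1-\alpha}(u)\|_{L^{d/(d-\alpha),1}}\le C\int_{-\infty}^\infty Per(E_t)^{1-\alpha}\,|E_t|^{\alpha(d-1)/d}\,dt.
\]
H\"older in $t$ with exponents $1/(1-\alpha)$ and $1/\alpha$ separates the two factors: by the coarea formula the first becomes $\|Du\|_{L^1}^{1-\alpha}$, and, for nonnegative $u$, the second becomes $\|u\|_{L^{d/(d-1),1}}^\alpha$ by the classical identity $\|v\|_{L^{p,1}}\simeq\int_0^\infty|\{v>t\}|^{1/p}\,dt$ (noting that $\chi_{E_t}$ is constant, hence $\mathcal{D}^{1-\alpha}(\chi_{E_t})\equiv 0$, for $t<0$ when $u\ge 0$). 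A signed $u$ is handled by applying the estimate to $u^+$ and $u^-$ separately and summing, using $\|Du^\pm\|_{L^1}\le\|Du\|_{L^1}$ and $\|u^\pm\|_{L^{d/(d-1),1}}\le\|u\|_{L^{d/(d-1),1}}$.

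The passage from $C^\infty_c$ to $BV(\mathbb{R}^d)$ is by mollification $u_\epsilon=u\ast\rho_\epsilon$: convolution with a probability density contracts every rearrangement-invariant norm, while $\|\nabla u_\epsilon\|_{L^1}\le|Du|(\mathbb{R}^d)$, and Fatou on the left-hand side completes the argument. I expect the main obstacle to be the pointwise bound in the first step: for $\alpha<1$ the gradient of the Riesz kernel has a non-integrable singularity at $y=x$, so the symmetric subtraction of $u(x)$ and the careful bookkeeping of the $\epsilon$-annular boundary term are essential. It is precisely this pointwise inequality that couples so cleanly with the layer-cake decomposition, and thereby delivers the sharp second parameter in the Lorentz norm, in contrast to the maximal-function route sketched in~\eqref{almostoptimal}.
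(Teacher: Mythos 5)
Your proposal is correct and follows essentially the same route as the paper's: the pointwise domination $|I_\alpha\nabla u|\leq C\,\mathcal{D}^{1-\alpha}(u)$ by integration by parts, the layer-cake slicing of $\mathcal{D}^{1-\alpha}(u)$ into $\mathcal{D}^{1-\alpha}(\chi_{E_t})$, Minkowski combined with Lemma~\ref{isoperimetricinequality}, H\"older in the level parameter $t$, the coarea formula, the distribution-function form of the $L^{p,1}$ quasi-norm, and reduction to $u\geq 0$ via $u=u^{+}-u^{-}$ (this is precisely the content of Theorem~\ref{interpolationinequalityprime} in the paper, from which Theorem~\ref{interpolationinequality} is then deduced). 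The only cosmetic divergence is the final passage to $BV(\mathbb{R}^d)$: the paper approximates strictly by $C^\infty_c$ functions and closes with a duality argument testing $I_\alpha\nabla u_n$ against $\varphi\in C_c(\mathbb{R}^d)$ with $\|\varphi\|_{L^{d/\alpha,\infty}}\leq 1$, whereas you mollify and invoke Fatou for the Lorentz quasi-norm; both are sound and serve the same purpose.
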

Of course, one can then deduce further results by making other variations on this theme, possibly also employing known interpolation inequalities.  For example, as it answers a question raised in a previous work of the author and Tien-Tsan Shieh \cite{Shieh-Spector-2018}, we here extend the Hardy inequality proven there for $u \geq 0$ to $u$ with arbitrary sign in
\begin{theorem}\label{ineq:hardy}
Let $\alpha \in (0,1)$.   There exists a constant $C=C(\alpha,d)>0$ such that
\begin{align*}
 \int_{\mathbb{R}^d} \frac{|u|\;}{|x|^{\alpha}}\;dx \leq  C\int_{\mathbb{R}^d}|D^\alpha u|\;dx.
\end{align*}
for all $u$ such that $D^\alpha u = I_{1-\alpha} \nabla u  \in L^1(\mathbb{R}^d;\mathbb{R}^d)$. 
\end{theorem}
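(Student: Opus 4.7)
The plan is to reduce the Hardy-type estimate to the Lorentz Sobolev embedding produced by Theorem \ref{mainresult}, combined with the elementary fact that the weight $|x|^{-\alpha}$ lies in the weak Lorentz space $L^{d/\alpha,\infty}(\mathbb{R}^d)$. Hölder's inequality in the Lorentz scale will then convert an $L^{d/(d-\alpha),1}$ bound on $u$ into the integral bound on the left-hand side.

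First I would observe that, since $I_{1-\alpha}$ commutes with partial derivatives in the sense of distributions, the hypothesis $D^\alpha u = I_{1-\alpha}\nabla u \in L^1(\mathbb{R}^d;\mathbb{R}^d)$ implies that the vector field $F := D^\alpha u$ is curl-free and belongs to $L^1$. Theorem \ref{mainresult} therefore yields
\[
\|I_\alpha F\|_{L^{d/(d-\alpha),1}(\mathbb{R}^d;\mathbb{R}^d)} \leq C\,\|D^\alpha u\|_{L^1(\mathbb{R}^d;\mathbb{R}^d)}.
\]
Using the semigroup identity $I_\alpha I_{1-\alpha} = I_1$ on the appropriate class of distributions, I would rewrite $I_\alpha F = I_1 \nabla u = \nabla I_1 u$, whose components are (up to sign) the Riesz transforms $R_j u$. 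Since the Riesz transforms are bounded on $L^{d/(d-\alpha),1}(\mathbb{R}^d)$ and satisfy the Calderón identity $u = -\sum_{j=1}^d R_j (R_j u)$, this gives the recovery estimate
\[
\|u\|_{L^{d/(d-\alpha),1}(\mathbb{R}^d)} \leq C'\,\|Ru\|_{L^{d/(d-\alpha),1}(\mathbb{R}^d;\mathbb{R}^d)} \leq C''\,\|D^\alpha u\|_{L^1(\mathbb{R}^d;\mathbb{R}^d)}.
\]

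To finish, I would note that the distribution function of $|x|^{-\alpha}$ is $|\{x : |x|^{-\alpha}>t\}| = c_d t^{-d/\alpha}$, so $|x|^{-\alpha}\in L^{d/\alpha,\infty}(\mathbb{R}^d)$ with a dimensional norm. Applying O'Neil's Hölder inequality in the Lorentz scale with the conjugate pair $(d/(d-\alpha),1)$ and $(d/\alpha,\infty)$ produces
\[
\int_{\mathbb{R}^d} \frac{|u(x)|}{|x|^\alpha}\,dx \leq C_d \, \bigl\| |\cdot|^{-\alpha}\bigr\|_{L^{d/\alpha,\infty}} \|u\|_{L^{d/(d-\alpha),1}} \leq C'''\,\|D^\alpha u\|_{L^1(\mathbb{R}^d;\mathbb{R}^d)},
\]
which is the desired inequality.

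The step I expect to require the most care is the justification of the composition $I_\alpha I_{1-\alpha}\nabla u = \nabla I_1 u$ together with the recovery of $u$ from $Ru$ via the Calderón identity, since these require $u$ to be a tempered distribution of controlled growth so that the Riesz potentials and Riesz transforms compose in the classical sense; this is an issue already handled in \cite{SSVS}, and one may first reduce to a suitably regular dense class (e.g.\ Schwartz functions with $D^\alpha u\in L^1$) and pass to the limit. Once these identities are in force, the remainder of the argument is simply bookkeeping in the Lorentz scale.
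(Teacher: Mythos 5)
Your proposal is correct and follows essentially the same route as the paper: apply Theorem \ref{mainresult} to the curl-free field $D^\alpha u$, use the semigroup property $I_\alpha D^\alpha u = Ru$ together with boundedness of the Riesz transforms on $L^{d/(d-\alpha),1}$ to bound $\|u\|_{L^{d/(d-\alpha),1}}$ by $\|D^\alpha u\|_{L^1}$, and finish by Lorentz-scale H\"older against $|x|^{-\alpha}\in L^{d/\alpha,\infty}$. The paper handles the regularity/density issue you flag at the end via the Bonami--Poornima approximation argument, exactly as you anticipate.
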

\noindent
The result in \cite{Shieh-Spector-2018} obtained the sharp constant for $u \geq 0$.  It would be interesting to understand whether one can show that the same constant for holds for unsigned $u$ (as in the case $\alpha=1$).

Let us make two further remarks here before moving to discuss dual results.  First, our proof obtains a slightly stronger result (see Theorem \ref{interpolationinequalityprime} in Section \ref{proof}): If $u \in W^{1,1}(\mathbb{R}^d)$ (or even $BV(\mathbb{R}^d)$) then in fact $\mathcal{D}^{1-\alpha}(u) \in L^{d/(d-\alpha),1}(\mathbb{R}^d)$.  One sees this is an improvement thanks to the easy inequality
\begin{align*}
\left|\int_{\mathbb{R}^d} \frac{u(x)-u(y)}{|x-y|^{d+1-\alpha}} \frac{x-y}{|x-y|}\;dy \right| \leq \int_{\mathbb{R}^d} \frac{|u(x)-u(y)|}{|x-y|^{d+1-\alpha}}\;dy,
\end{align*}
the left-hand-side being equal to $|I_\alpha Du|$, up to a multiplicative constant, in an appropriate sense.  Second, when one views Theorem \ref{mainresult} as the inequality \eqref{gradientinequality}, then an interesting fact (which could already be deduced from known embeddings) is made apparent:  While for $u \in L^1(\mathbb{R}^d)$ one has that
\begin{align*}
I_d u(x) := \frac{2}{\pi^{d/2} 2^d \Gamma(d/2)} \int_{\mathbb{R}^d} u(y) \log \frac{1}{|x-y|}\;dy
\end{align*}
is a function of bounded mean oscillation (see p.~417 in \cite{JohnNirenberg}), the assumption $\nabla u \in L^1(\mathbb{R}^d;\mathbb{R}^d)$ implies
\begin{align*}
I_d \nabla u(x) = \frac{2}{\pi^{d/2} 2^d \Gamma(d/2)} \int_{\mathbb{R}^d} \nabla u(y) \log \frac{1}{|x-y|}\;dy
\end{align*}
is a bounded function (that this holds for $d \in \mathbb{N}$ even has been commented by Van Schaftingen in \cite{VanSchaftingen2013}).

Finally we discuss a dual result concerning the mapping properties of the Riesz potentials which follows from Theorems \ref{mainresult} and \ref{interpolationinequality}.  In general, one has
\begin{align*}
I_\alpha : L^{d/\alpha,\infty}(\mathbb{R}^d) \to BMO(\mathbb{R}^d),
\end{align*}
for $BMO(\mathbb{R}^d)$ the space of functions of bounded mean oscillation.  Thus, the duality of the Hardy space $\mathcal{H}^1(\mathbb{R}^d)$ and $BMO(\mathbb{R}^d)$ implies that for any $g \in L^{d/\alpha,\infty}(\mathbb{R}^d)$, there exists functions $\{Y_j\}_{j=0}^d \subset L^\infty(\mathbb{R}^d)$ such that
\begin{align*}
I_\alpha g  = Y_0 + \sum_{j=1}^d R_j Y_j.
\end{align*}
For the canonical example of a reasonably smooth element of $L^{d/\alpha,\infty}(\mathbb{R}^d)$, the Riesz kernel $I_{d-\alpha}$, one has, in a suitable sense,
\begin{align*}
I_\alpha I_{d-\alpha}(x)  = \frac{2}{\pi^{d/2}2^d \Gamma(d/2)} \log|x| = \sum_{j=1}^d R_j Y_j
\end{align*}
for $Y_j = \frac{1}{(d-1)\gamma(d-1)} \frac{x_j}{|x|}$ (see, for example, \cite{GargSpector}).  One might suppose this is because of some benefit gained by the smoothness.  In fact, such a decomposition holds in general for elements in this space, that one does not need the $Y_0$:
\begin{corollary}\label{dual}
Let $\alpha \in (0,1)$.  There exists a constant $C=C(\alpha,d)>0$ such that for every $g \in L^{d/\alpha,\infty}(\mathbb{R}^d)$, there exists functions $\{Y_j\}_{j=1}^d \in L^\infty(\mathbb{R}^d)$ such that
\begin{align*}
I_\alpha g = \sum_{j=1}^d R_j Y_j
\end{align*}
with 
\begin{align*}
\|Y\|_{L^\infty(\mathbb{R}^d;\mathbb{R}^d)} \leq C\|g\|_{L^{d/\alpha,\infty}(\mathbb{R}^d)}.
\end{align*}
\end{corollary}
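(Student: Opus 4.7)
The plan is to deduce the corollary from Theorem \ref{mainresult} by a Hahn--Banach duality argument, converting the $L^1$-type estimate \eqref{l1typeestimate} into a decomposition of $I_\alpha g$. Fix $g \in L^{d/\alpha,\infty}(\mathbb{R}^d)$ and let $X \subset L^1(\mathbb{R}^d;\mathbb{R}^d)$ be the linear subspace of vector fields of the form $Rf$, where $f \in \mathcal{D}^\prime(\mathbb{R}^d)$ and $Rf \in L^1(\mathbb{R}^d;\mathbb{R}^d)$. On $X$ I would consider the linear functional
\[
\ell(Rf) := \int_{\mathbb{R}^d} g(x)\, I_\alpha f(x)\, dx,
\]
interpreted via the duality between $L^{d/(d-\alpha),1}$ and $L^{d/\alpha,\infty}$. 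The inequality \eqref{l1typeestimate} yields
\[
|\ell(Rf)| \leq \|g\|_{L^{d/\alpha,\infty}}\, \|I_\alpha f\|_{L^{d/(d-\alpha),1}} \leq C\, \|g\|_{L^{d/\alpha,\infty}}\, \|Rf\|_{L^1},
\]
so $\ell$ is bounded on $X$ of norm at most $C\|g\|_{L^{d/\alpha,\infty}}$.

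Next I would extend $\ell$ by Hahn--Banach to a bounded linear functional on all of $L^1(\mathbb{R}^d;\mathbb{R}^d)$ of the same norm, and invoke the Riesz representation to produce $Y = (Y_1,\ldots,Y_d) \in L^\infty(\mathbb{R}^d;\mathbb{R}^d)$ with $\|Y\|_{L^\infty(\mathbb{R}^d;\mathbb{R}^d)} \leq C\|g\|_{L^{d/\alpha,\infty}}$ such that
\[
\int_{\mathbb{R}^d} g\, I_\alpha f\, dx = \int_{\mathbb{R}^d} Rf \cdot Y\, dx
\]
for every $f$ with $Rf \in L^1$. Specializing to $f \in \mathcal{S}(\mathbb{R}^d)$ with $\widehat{f}(0)=0$ -- for which $Rf \in L^1$ follows from the $|x|^{-(d+1)}$ decay of $\nabla I_1 f$ at infinity -- I would use the self-adjointness of $I_\alpha$ and the skew-adjointness $R_j^\ast = -R_j$ to rewrite this identity as
\[
\langle I_\alpha g, f\rangle = -\sum_{j=1}^d \langle R_j Y_j, f\rangle,
\]
so the distribution $I_\alpha g + \sum_{j=1}^d R_j Y_j$ annihilates the hyperplane of Schwartz functions of zero mean. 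Hence it is a constant, and replacing $Y_j$ by $-Y_j$ (which preserves the $L^\infty$ bound) yields the claimed identity $I_\alpha g = \sum_{j=1}^d R_j Y_j$ in $BMO(\mathbb{R}^d)$ modulo constants.

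The hardest part is not the duality itself but the bookkeeping around the distributional framework: one must carefully identify the class of $f$ modulo polynomials for which both pairings $\int g\, I_\alpha f\, dx$ and $\langle R_j Y_j, f\rangle$ are simultaneously meaningful, and verify that the density of zero-mean Schwartz functions within this class is enough to pin down $I_\alpha g$ modulo constants (consistently with $I_\alpha g \in BMO(\mathbb{R}^d)$). Once these function-space issues are handled, the decomposition is a formal consequence of \eqref{l1typeestimate} together with the dualities $(L^{d/(d-\alpha),1})^* = L^{d/\alpha,\infty}$ and $(L^1)^* = L^\infty$.
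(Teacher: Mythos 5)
Your proof is correct and follows essentially the same route as the paper: the paper defines the space $X=\{f:\;Rf\in L^1\}$, identifies its dual $X^\prime$ with $\{\sum_j R_j Y_j : Y_j\in L^\infty\}$ (which is exactly the Hahn--Banach plus Riesz-representation step you carry out explicitly), and bounds $\|I_\alpha g\|_{X^\prime}$ by the same pairing estimate deduced from Theorem \ref{mainresult}/\eqref{l1typeestimate}. The only cosmetic difference is that you make the ``modulo constants'' identification in $BMO$ explicit, which the paper leaves implicit.
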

Results of this type have been pioneered by Bourgain and Brezis \cite{BourgainBrezis2002, BourgainBrezis2003, BourgainBrezis2004, BourgainBrezis2007}, and then subsequently studied by a number of authors (see, for example \cite{LanzaniStein2005}, \cite{BousquetMironescuRuss2013},  \cite{ChanilloVanSchaftingenYung}) in a far greater generality than we represent here.

The plan of the paper is as follows.  In Section \ref{preliminaries} we recall some background material on functions of bounded variation and on the Lorentz spaces.  For the former we recall some definitions, as well as the coarea formula.  For the latter we record useful versions of H\"older's and Young's inequalities one has on this scale.  In Section \ref{three} we give proofs of several lemmas that are useful in obtaining our result.  In Section \ref{proof} we prove Lemma \ref{isoperimetricinequality} and another intermediate result given in Theorem \ref{interpolationinequalityprime} before proceeding to prove Theorems \ref{mainresult}, \ref{interpolationinequality}, \ref{ineq:hardy}, and Corollary \ref{dual}.

\section{Preliminaries}\label{preliminaries}
In the Introduction we have defined the Riesz potential with a normalization constant $\gamma$.  We here recall that its value (see, e.g. \cite{Sharmonic}):
\begin{align*}
\gamma(\alpha):= \frac{\pi^{d/2}2^\alpha \Gamma\left(\frac{\alpha}{2}\right)}{\Gamma\left(\frac{d-\alpha}{2}\right)}.
\end{align*}

Let us now recall some results concerning the Lorentz spaces $L^{q,r}(\mathbb{R}^d)$.  We follow the convention of O'Neil in \cite{oneil}.  We being with some definitions related to the non-increasing rearrangement of a function.
\begin{definition}
For $f$ a measurable function on $\mathbb{R}^d$, we define
\begin{align*}
m(f,y):= |\{ |f|>y\}|.
\end{align*} 
As this is a non-increasing function of $y$, it admits a left-continuous inverse, called the non-negative rearrangment of $f$, and which we denote $f^*(x)$.  Further, for $x>0$ we define
\begin{align*}
f^{**}(x):= \frac{1}{x}\int_0^x f(t)\;dt.
\end{align*}
\end{definition}
With these basic results, we can now give a definition of the Lorentz spaces $L^{q,r}(\mathbb{R}^d)$.  In particular, we define
\begin{definition}
Let $1<q<+\infty$ and $1\leq r<+\infty$.  We define
\begin{align*}
\|f\|_{L^{q,r}(\mathbb{R}^d)} := \left( \int_0^\infty \left[t^{1/q} f^{**}(t)\right]^r\frac{dt}{t}\right)^{1/r},
\end{align*}
and for $1\leq q \leq+\infty$ and $r=+\infty$
\begin{align*}
\|f\|_{L^{q,\infty}(\mathbb{R}^d)} := \sup_{t>0} t^{1/q} f^{**}(t).
\end{align*}
\end{definition}
For these spaces, one has a duality between $L^{q,r}(\mathbb{R}^d)$ and $L^{q',r'}(\mathbb{R}^d)$ for $1<q<+\infty$ and $1\leq r < +\infty$.  This implies that one has
\begin{align*}
\| f\|_{L^{q,r}(\mathbb{R}^d)} = \sup \left\{ \left| \int_{\mathbb{R}^d} fg \;dx \right| : g \in L^{q',r'}(\mathbb{R}^d) \;\; \|g \|_{L^{q',r'}(\mathbb{R}^d)}\leq 1\right\},
\end{align*}
see, for example, Theorem 1.4.17 on p.~52 of \cite{grafakos}.

Let us observe that with this definition
\begin{align*}
\|f\|_{L^{1,\infty}(\mathbb{R}^d)} &= \|f\|_{L^1(\mathbb{R}^d)} \\
\|f\|_{L^{\infty,\infty}(\mathbb{R}^d)} &= \|f\|_{L^\infty(\mathbb{R}^d)},
\end{align*}
where the spaces $L^1(\mathbb{R}^d)$ and $L^\infty(\mathbb{R}^d)$ are intended in the usual sense.  It will be important for our purposes to have different endpoints than these, which is only possible through the introduction of a different object.  In particular, 
for $1<q<+\infty$, one has a quasi-norm on the Lorentz spaces $L^{q,r}(\mathbb{R}^d)$ that is equivalent to the norm we have defined.  What is more, this quasi-norm can be used to define the Lorentz spaces without such restrictions on $q$ and $r$.  Therefore let us introduce the following definition.
\begin{definition}
Let $1 \leq q <+\infty$ and $0<r<+\infty$ and we define
\begin{align*}
|||f|||_{\tilde{L}^{q,r}(\mathbb{R}^d)} :=  \left(\int_0^\infty \left(t^{1/q} f^*(t)\right)^{r} \frac{dt}{t}\right)^{1/r}.
\end{align*}
\end{definition}
Then one has the following result on the equivalence of the quasi-norm on $\tilde{L}^{q,r}(\mathbb{R}^d)$ and the norm on $L^{q,r}(\mathbb{R}^d)$ (and so in the sequel we drop the tilde):
\begin{proposition}
Let $1<q<+\infty$ and $1\leq r \leq +\infty$.  Then 
\begin{align*}
|||f|||_{\tilde{L}^{q,r}(\mathbb{R}^d)} \leq \|f\|_{L^{q,r}(\mathbb{R}^d)}\leq q' |||f|||_{\tilde{L}^{q,r}(\mathbb{R}^d)}.
\end{align*}
\end{proposition}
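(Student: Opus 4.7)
The plan is to establish the two inequalities separately, the first as a straightforward pointwise comparison and the second via a weighted Hardy inequality.

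For the lower bound, the key observation is that $f^{*}$ is non-increasing, so for every $t>0$ one has
\begin{align*}
f^{**}(t) = \frac{1}{t}\int_0^t f^*(s)\,ds \geq \frac{1}{t}\int_0^t f^*(t)\,ds = f^*(t).
\end{align*}
Raising this pointwise comparison to the power $r$, multiplying by the positive weight $t^{r/q-1}$ (or taking suprema when $r=+\infty$), and integrating in $t$ yields $|||f|||_{\tilde{L}^{q,r}(\mathbb{R}^d)} \leq \|f\|_{L^{q,r}(\mathbb{R}^d)}$ at once.

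For the upper bound when $1\leq r<+\infty$, I would rewrite
\begin{align*}
\|f\|_{L^{q,r}(\mathbb{R}^d)}^r = \int_0^\infty \left(\int_0^t f^*(s)\,ds\right)^r t^{r/q-1-r}\,dt
\end{align*}
and apply Hardy's inequality
\begin{align*}
\int_0^\infty \left(\int_0^t g(s)\,ds\right)^r t^{-\beta-1}\,dt \leq \left(\frac{r}{\beta}\right)^r \int_0^\infty g(t)^r t^{r-\beta-1}\,dt
\end{align*}
with $g=f^*$ and $\beta=r/q'$. Since $r/\beta = q'$ and $r-\beta-1 = r/q-1$, this delivers the inequality with constant $q'$. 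The case $r=+\infty$ I would handle directly: from the bound $f^*(s) \leq s^{-1/q}|||f|||_{\tilde{L}^{q,\infty}(\mathbb{R}^d)}$, integration gives $f^{**}(t) \leq q'\,t^{-1/q}|||f|||_{\tilde{L}^{q,\infty}(\mathbb{R}^d)}$, and taking the supremum over $t>0$ yields the same constant.

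The only obstacle worth noting is matching parameters in Hardy's inequality so that the constant comes out to be precisely $q'$; this is essentially bookkeeping. The hypothesis $q>1$ enters in exactly the right place to guarantee $\beta = r/q' > 0$ (equivalently, convergence of $\int_0^t s^{-1/q}\,ds$ at the origin), without which both arguments collapse, reflecting the well-known failure of the comparison at the endpoint $q=1$.
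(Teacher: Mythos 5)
Your proof is correct, and it is essentially the same approach the paper takes: the lower bound is the elementary pointwise comparison $f^{**}\geq f^*$, and for the upper bound the paper simply cites O'Neil's Lemma 2.2 (with Hunt for $r=\infty$), which is precisely the weighted Hardy inequality you invoke with $\beta = r/q'$. You have merely written out the bookkeeping that the cited lemma encapsulates.
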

The proof is for $1 \leq r<+\infty$ can be seen by an application of Lemma 2.2 in \cite{oneil}, while the case $r=+\infty$ is an exercise in calculus (see also \cite{Hunt}, equation (2.2) on p.~258).

It will be useful for our purposes to observe an alternative formulation of this equivalent quasi-norm in terms of the distribution function.  In particular, Proposition 1.4.9 in \cite{grafakos} implies the following.
\begin{proposition}
Let $1<q<+\infty$ and $0<r<+\infty$.  Then
\begin{align*}
|||f|||_{L^{q,r}(\mathbb{R}^d)} \equiv q^{1/r} \left(\int_0^\infty \left(t |\{ |f|>t\}|^{1/q}\right)^{r} \frac{dt}{t}\right)^{1/r}.
\end{align*}
\end{proposition}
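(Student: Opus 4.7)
The plan is to compute the quasi-norm on the left-hand side directly via the layer-cake representation of $(f^*(t))^r$, then swap orders of integration using Fubini. This should yield not just an equivalence but in fact an exact equality with the right-hand side, so the factor $q^{1/r}$ appearing there is sharp.

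First I would record the fundamental duality between the distribution function $m(f,\cdot)$ and its rearrangement: because $f^*$ is defined as the left-continuous inverse of the nonincreasing function $m(f,\cdot)$, one has $f^*(t) > s$ if and only if $m(f,s) > t$ for $(t,s)$ outside a set of product measure zero. In particular, the indicator functions $\chi_{\{s < f^*(t)\}}$ and $\chi_{\{t < m(f,s)\}}$ agree outside a negligible set consisting of the (at most countable) jumps of $m(f,\cdot)$ and the corresponding plateaus of $f^*$. This is the only place where the left-continuous inverse structure of $f^*$ really enters.

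Next I would apply the elementary layer-cake identity
\begin{align*}
(f^*(t))^r = r\int_0^\infty s^{r-1}\,\chi_{\{s<f^*(t)\}}\,ds
\end{align*}
inside the defining integral and invoke Fubini, which is justified since the integrand is nonnegative:
\begin{align*}
\int_0^\infty t^{r/q-1}(f^*(t))^r\,dt
&= r\int_0^\infty s^{r-1}\int_0^\infty t^{r/q-1}\chi_{\{t<m(f,s)\}}\,dt\,ds \\
&= r\int_0^\infty s^{r-1}\cdot \frac{q}{r}\,m(f,s)^{r/q}\,ds \\
&= q\int_0^\infty \bigl(s\,m(f,s)^{1/q}\bigr)^r\,\frac{ds}{s}.
\end{align*}
Taking $r$-th roots on both sides gives precisely the claimed identity, with $\equiv$ read as an actual equality.

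The only delicate point is the indicator identity used in Fubini, which amounts to the standard fact that $f^*$ and $m(f,\cdot)$ are equimeasurable as nonnegative functions on $(0,\infty)$; everything else is mechanical. Once this equimeasurability is in hand, the argument is a two-line application of Fubini, and the case $r=\infty$ (not needed here since the hypothesis is $0<r<\infty$) would follow by a separate supremum calculation in the same spirit.
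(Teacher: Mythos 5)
Your proof is correct, and it establishes the identity as an actual equality with the stated constant $q^{1/r}$, which is the right reading of $\equiv$. The paper does not prove this proposition itself but defers to Proposition 1.4.9 of Grafakos; your layer-cake-plus-Fubini derivation, using equimeasurability of $f^*$ and $m(f,\cdot)$, is precisely the standard argument that reference records, so the approaches coincide.
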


With either definition one can check the following scaling property that will be useful for our purposes (cf. Remark 1.4.7 in \cite{grafakos}):
\begin{align*}
|||\; |f|^\gamma |||_{L^{q,r}(\mathbb{R}^d)} = |||f|||^\gamma_{L^{\gamma q, \gamma r}(\mathbb{R}^d)}.
\end{align*}

With these definitions, we are now prepared to state H\"older's and Young's inequality on the Lorentz scale.  In particular on this scale one has a version of H\"older's inequality (Theorem 3.4 in \cite{oneil}):
\begin{theorem}\label{holder}
Let $f \in L^{q_1,r_1}(\mathbb{R}^d)$ and $g \in L^{q_2,r_2}(\mathbb{R}^d)$, where
\begin{align*}
\frac{1}{q_1}+\frac{1}{q_2}&=\frac{1}{q}<1\\
\frac{1}{r_1}+\frac{1}{r_2}&\geq  \frac{1}{r},
\end{align*}
for some $r \geq 1$.   Then
\begin{align*}
\|fg\|_{L^{q,r}(\mathbb{R}^d)} \leq q'\|f \|_{L^{q,r_1}(\mathbb{R}^d)}\|g \|_{L^{q',r_2}(\mathbb{R}^d)}
\end{align*}
\end{theorem}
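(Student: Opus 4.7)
The plan is to pass to the equivalent quasi-norm $|||\cdot|||_{L^{q,r}}$ provided by the preceding Proposition, where the Lorentz norm is expressed in terms of $f^*$ rather than $f^{**}$. In this formulation, the one-dimensional nature of the integral allows a clean application of Hölder's inequality, and the combinatorics of products is controlled by a single rearrangement inequality. The target is an inequality of the form $|||fg|||_{L^{q,r}} \leq C\, |||f|||_{L^{q_1,r_1}}|||g|||_{L^{q_2,r_2}}$, after which converting back to $\|\cdot\|_{L^{q,r}}$ produces the factor $q'$.

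The key ingredient is the pointwise rearrangement inequality
\[
(fg)^*(t_1+t_2) \leq f^*(t_1)\, g^*(t_2),
\]
which follows from the set inclusion $\{|fg|>f^*(t_1)g^*(t_2)\} \subseteq \{|f|>f^*(t_1)\} \cup \{|g|>g^*(t_2)\}$ together with subadditivity of Lebesgue measure. Setting $t_1 = t_2 = t/2$ gives $(fg)^*(t) \leq f^*(t/2)g^*(t/2)$, so after a change of variable and the splitting $s^{1/q} = s^{1/q_1} s^{1/q_2}$ coming from $1/q = 1/q_1 + 1/q_2$,
\[
|||fg|||_{L^{q,r}}^r \leq 2^{r/q}\int_0^\infty \bigl(s^{1/q_1} f^*(s)\bigr)^r \bigl(s^{1/q_2} g^*(s)\bigr)^r\, \frac{ds}{s}.
\]
By the monotonicity $L^{q,\rho}\hookrightarrow L^{q,\rho'}$ for $\rho\leq \rho'$ at fixed $q$, we may reduce to the equality case $1/r_1 + 1/r_2 = 1/r$, so that $p_i := r_i/r$ are conjugate Hölder exponents. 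Applying Hölder's inequality in the integral (with respect to the Haar measure $ds/s$) then separates the two factors and yields exactly $|||f|||_{L^{q_1,r_1}}^r \cdot |||g|||_{L^{q_2,r_2}}^r$ on the right; taking $r$-th roots and converting back to the norm via the Proposition delivers the stated inequality. The case $r=\infty$ is handled separately and more easily: the pointwise bound gives $t^{1/q}(fg)^*(t) \leq (t/2)^{1/q_1}f^*(t/2)\cdot (t/2)^{1/q_2} g^*(t/2)$ directly.

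The main obstacle is the bookkeeping of constants — in particular, recovering precisely the factor $q'$ on the right rather than a larger multiple of it. This is where the passage through the quasi-norm is essential: all of the harmless factors of $2^{1/q}$ and $2^{1/q_1 + 1/q_2}$ arising from the dyadic splitting get absorbed into the single equivalence constant $q'$ between $\|\cdot\|_{L^{q,r}}$ and $|||\cdot|||_{L^{q,r}}$, which is the only place the condition $q>1$ (i.e.\ $1/q < 1$) is used in a nontrivial way. One small subtlety worth flagging is that the right-hand side of the statement as written appears to carry a typographical slip: to match the hypothesis $1/q_1 + 1/q_2 = 1/q$, the norms should read $\|f\|_{L^{q_1,r_1}}\|g\|_{L^{q_2,r_2}}$, and the proof above produces precisely this.
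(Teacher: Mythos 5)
The paper offers no proof of this statement; it is quoted verbatim (modulo a typographical slip) from O'Neil's paper \cite{oneil}, Theorem~3.4. So you are supplying a proof where the author supplies only a citation, and the correct comparison is against O'Neil's argument, not against anything in this manuscript.

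Your argument is built on the right ingredient --- the submultiplicativity of rearrangements, $(fg)^*(t_1+t_2) \le f^*(t_1)g^*(t_2)$ --- and the Hölder step in the $t$-integral is sound once $1/r_1+1/r_2 = 1/r$ is in force. You also correctly read off the typo: the hypothesis $1/q_1+1/q_2 = 1/q$ forces the right-hand side to be $\|f\|_{L^{q_1,r_1}}\|g\|_{L^{q_2,r_2}}$, and indeed that is what the author invokes in the proof of Lemma~\ref{isoperimetricinequality} (with $q_1 = 1/(1-\alpha)$, $q_2 = d/\alpha(d-1)$, $r_1 = \infty$, $r_2 = 1$).

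The genuine gap is your claim about the constant. The dyadic split $t_1 = t_2 = t/2$ followed by the change of variable produces, irreducibly, the factor $2^{1/q}$ in
\[
|||fg|||_{L^{q,r}} \;\le\; 2^{1/q}\,|||f|||_{L^{q_1,r_1}}\,|||g|||_{L^{q_2,r_2}},
\]
and the passage back to the norm $\|\cdot\|_{L^{q,r}}$ via the equivalence in the paper's Proposition costs the factor $q'$ \emph{exactly}, not ``up to absorption.'' Chaining the three inequalities gives $\|fg\|_{L^{q,r}} \le 2^{1/q}\,q'\,\|f\|_{L^{q_1,r_1}}\|g\|_{L^{q_2,r_2}}$, strictly worse than the stated $q'$; there is nowhere for $2^{1/q}>1$ to disappear. (Splitting $t = \theta t + (1-\theta)t$ and optimizing in $\theta$ gives a different constant $\theta^{-1/q_1}(1-\theta)^{-1/q_2}>1$ but never $1$.) Similarly, the reduction from $1/r_1+1/r_2 \ge 1/r$ to equality via the embedding $L^{q,\rho}\hookrightarrow L^{q,\rho'}$ is not free of constants, so the same caveat applies there. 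O'Neil's sharp constant is obtained by working directly with $f^{**}$ and a more careful rearrangement lemma (his Lemma~1.5), not by the $t/2$ device. None of this matters for how the theorem is used in the paper, where only the existence of \emph{some} constant depending on the exponents is needed; but as a proof of the theorem as stated, you prove a weaker inequality while asserting the stated one.
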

We also have the following very useful generalization of Young's inequality (Theorem 3.1 in \cite{oneil}):
\begin{theorem}\label{young}
Let $f \in L^{q_1,r_1}(\mathbb{R}^d)$ and $g \in L^{q_2,r_2}(\mathbb{R}^d)$, and suppose $1< q<+\infty$ and $1\leq r\leq +\infty$ satisfy
\begin{align*}
\frac{1}{q_1}+\frac{1}{q_2}-1&=\frac{1}{q}\\
\frac{1}{r_1}+\frac{1}{r_2}&\geq \frac{1}{r}.
\end{align*}
Then
\begin{align*}
\|f\ast g\|_{L^{q,r}(\mathbb{R}^d)} \leq 3q \|f \|_{L^{q_1,r_1}(\mathbb{R}^d)}\|g \|_{L^{q_2,r_2}(\mathbb{R}^d)}.
\end{align*}
\end{theorem}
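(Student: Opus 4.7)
The plan is to prove this by O'Neil's classical rearrangement method, which rests on a pointwise bound for $(f*g)^{**}$ followed by direct integration against the Lorentz quasi-norm. First I would establish the fundamental rearrangement inequality
\[
(f*g)^{**}(t) \leq t\, f^{**}(t)\, g^{**}(t) + \int_t^\infty f^*(s)\, g^*(s)\, ds,
\]
by splitting $f = f^1 + f^2$ with $f^1 = (f - f^*(t)\,\mathrm{sgn}\, f)\chi_{\{|f|>f^*(t)\}}$ the ``large part'' and $f^2$ the bounded remainder, and analogously for $g$. Expanding $f*g$ into four cross-convolutions and bounding each by the appropriate $L^1$--$L^\infty$ or $L^1$--$L^1$ inequality, the small-small term contributes a bound of the form $t\|f^2\|_\infty \|g^2\|_\infty \leq t f^*(t) g^*(t)$, the mixed terms collectively produce the $t f^{**}(t) g^{**}(t)$ contribution, and the large-large convolution is estimated in $L^1$ to yield the integral tail after passing through the distribution function.

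Next I would use the equivalent quasi-norm $|||h|||_{L^{q,r}} \equiv (\int_0^\infty (t^{1/q} h^*(t))^r\,dt/t)^{1/r}$ and the fact that $h^* \leq h^{**}$ together with the equivalence $\|h\|_{L^{q,r}} \leq q' |||h|||_{L^{q,r}}$ (valid since $q>1$) to reduce matters to estimating $t^{1/q}(f*g)^{**}(t)$ in $L^r(dt/t)$. For the first term in the pointwise bound, the relation $1 + 1/q = 1/q_1 + 1/q_2$ gives the clean factorization
\[
t^{1/q}\cdot t\, f^{**}(t)\,g^{**}(t) = \bigl(t^{1/q_1} f^{**}(t)\bigr)\bigl(t^{1/q_2} g^{**}(t)\bigr),
\]
after which Hölder's inequality in the measure $dt/t$ with exponents $r_1,r_2$ immediately yields the desired product of Lorentz norms. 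For the second term I would write
\[
t^{1/q}\!\int_t^\infty f^*(s) g^*(s)\, ds = t^{1/q}\!\int_t^\infty \bigl(s^{1/q_1}f^*(s)\bigr)\bigl(s^{1/q_2}g^*(s)\bigr)\, s^{-1/q}\,\frac{ds}{s},
\]
and estimate its $L^r(dt/t)$ norm by a Hardy-type inequality (applicable because the exponent $-1/q$ is negative), finishing with Hölder in $ds/s$ against exponents $r_1$ and $r_2$. The hypothesis $1/r_1 + 1/r_2 \geq 1/r$ is used at the very end via the Lorentz nesting $L^{q,r'}\hookrightarrow L^{q,r}$ with $1/r' := 1/r_1 + 1/r_2$, which upgrades the natural estimate at $r'$ to the stated one at $r$.

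The main technical obstacle is Step 1: the delicate decomposition into large and small parts (subtracting the level $f^*(t)$ rather than merely truncating) is what allows the cross terms to pair into the compact form $t f^{**}(t) g^{**}(t)$ instead of a less usable expression. The secondary difficulty is the bookkeeping of multiplicative constants so as to land at the explicit factor $3q$: this requires combining the constant $q'$ from the $f^*$-vs-$f^{**}$ equivalence with the constant from the Hardy inequality (essentially $q$) and recognizing how these combine with the Hölder step to produce the claimed bound.
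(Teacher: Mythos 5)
This result is stated in the paper as a background fact and is not proved there: the paper simply quotes it as Theorem~3.1 of O'Neil's 1963 Duke paper. Your proposal is a faithful reconstruction of O'Neil's original rearrangement argument, so in that sense it is ``the same approach as the source.'' The key ingredients you identify are exactly the right ones: the pointwise bound $(f\ast g)^{**}(t)\le t\,f^{**}(t)g^{**}(t)+\int_t^\infty f^*g^*\,ds$, the factorization using $1+1/q=1/q_1+1/q_2$, H\"older in $dt/t$, and a conjugate Hardy inequality (whose constant is $q$, seen most cleanly by noting that $t\mapsto t^{1/q}\int_t^\infty h(s)s^{-1/q}\,ds/s$ is a multiplicative convolution with a one-sided kernel of $L^1$-mass $q$). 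Two small remarks on bookkeeping: the Lorentz nesting you invoke at the end is applicable because the integrands you feed into it ($f^{**}g^{**}$ and $f^*g^*$) are non-increasing, and so are genuine decreasing rearrangements---this monotonicity is what makes the nesting legal and is worth stating; and the nesting constant and the Hardy/equivalence constants do interact, so reproducing the sharp $3q$ requires a bit more care than the sketch suggests, but the structure you give is the correct one.
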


Here we utilize certain estimates for functions of bounded variation and sets of finite perimeter.  Let us here recall their definitions and some properties concerning them.  We define the space of functions of bounded variation as
\begin{align*}
BV(\mathbb{R}^d):= \left\{ u \in L^1(\mathbb{R}^d) :  \sup_{\Phi} \int_{\mathbb{R}^d} u \operatorname*{div} \Phi\;dx <+\infty \right\},
\end{align*}
where the supremum is taken over all
\begin{align*}
 \left\{\Phi \in C^1_c(\mathbb{R}^d;\mathbb{R}^d), \; \| \Phi \|_{L^\infty(\mathbb{R}^d;\mathbb{R}^d)} \leq 1\right\}.
\end{align*}
This definition implies the distributional derivative of $u$, which we denote by $Du$, is a Radon measure with finite total variation:
\begin{align*}
|Du|(\mathbb{R}^d)= \int_{\mathbb{R}^d} d|Du| <+\infty.
\end{align*}
We say that a set $E \subset \mathbb{R}^d$ has finite perimeter if $|E|<+\infty$ and $\chi_E \in BV(\mathbb{R}^d)$.  In particular, this implies that
\begin{align*}
Per(E):= |D\chi_E|(\mathbb{R}^d)= \sup \left\{ \left| \int_{\mathbb{R}^d} \chi_E \operatorname*{div} \Phi\;dx \right|  :\Phi \in C^1_c(\mathbb{R}^d;\mathbb{R}^d), \; \| \Phi \|_{L^\infty(\mathbb{R}^d;\mathbb{R}^d)} \leq 1\right\}<+\infty.
\end{align*}
For these functions, one has the product rule (see, for example, \cite{AFP}, p.~118, Proposition 3.2):
\begin{proposition}
Suppose $u \in BV(\mathbb{R}^d)$ and $\varphi \in C^1_c(\mathbb{R}^d)$.  Then
\begin{align*}
D(u\varphi) = Du \varphi + u \nabla \varphi \mathcal{L}^d.
\end{align*}
\end{proposition}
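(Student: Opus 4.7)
The plan is to prove the product rule by approximating $u$ with smooth functions and applying the classical pointwise Leibniz rule, then passing to the limit in the distributional identity. Both sides of the claimed identity are Radon measures, so it suffices to show they agree when tested against an arbitrary $\psi \in C_c^1(\mathbb{R}^d;\mathbb{R}^d)$; in other words, to show
\begin{equation*}
-\int_{\mathbb{R}^d} u\varphi \operatorname{div}\psi\,dx \;=\; \int_{\mathbb{R}^d} \varphi\psi\,dDu \;+\; \int_{\mathbb{R}^d} u\, \nabla\varphi\cdot\psi\,dx.
\end{equation*}

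First, I would fix a standard nonnegative mollifier $\rho \in C_c^\infty(\mathbb{R}^d)$ with $\int \rho = 1$, set $\rho_n(x)=n^d\rho(nx)$, and define $u_n := \rho_n \ast u \in C^\infty(\mathbb{R}^d) \cap L^1(\mathbb{R}^d)$. Two standard properties of mollification of BV functions will be used: $u_n \to u$ strongly in $L^1(\mathbb{R}^d)$, and $\nabla u_n\,\mathcal{L}^d = \rho_n \ast Du$ as measures, with $\nabla u_n\,\mathcal{L}^d \rightharpoonup^{*} Du$ in the sense of Radon measures (uniform bound $|\nabla u_n|(\mathbb{R}^d)\le |Du|(\mathbb{R}^d)$ together with convergence against compactly supported continuous test functions, which follows by a standard mollification argument since any such test function is uniformly continuous).

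Second, for each $n$ the classical Leibniz rule gives $\nabla(u_n\varphi) = \varphi\nabla u_n + u_n \nabla\varphi$ pointwise. Integrating by parts against $\psi \in C_c^1(\mathbb{R}^d;\mathbb{R}^d)$ yields
\begin{equation*}
-\int_{\mathbb{R}^d} u_n\varphi \operatorname{div}\psi\,dx \;=\; \int_{\mathbb{R}^d} \varphi\psi\cdot\nabla u_n\,dx \;+\; \int_{\mathbb{R}^d} u_n\,\nabla\varphi\cdot\psi\,dx.
\end{equation*}
The left-hand side converges to $-\int u\varphi \operatorname{div}\psi\,dx$ because $u_n \to u$ in $L^1$ and $\varphi \operatorname{div}\psi \in L^\infty$ with compact support. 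The second term on the right converges to $\int u\,\nabla\varphi\cdot\psi\,dx$ by the same reasoning. For the first term on the right, the integrand may be written as $\int_{\mathbb{R}^d} \varphi\psi\, d(\nabla u_n\,\mathcal{L}^d)$, and since $\varphi\psi \in C_c(\mathbb{R}^d;\mathbb{R}^d)$ the weak-$*$ convergence $\nabla u_n\,\mathcal{L}^d \rightharpoonup^{*} Du$ gives convergence to $\int \varphi\psi\,dDu$. Passing to the limit in all three terms establishes the identity above, which is exactly the distributional meaning of $D(u\varphi) = \varphi\,Du + u\,\nabla\varphi\,\mathcal{L}^d$.

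The only nonroutine point is justifying the weak-$*$ convergence $\nabla u_n\,\mathcal{L}^d \rightharpoonup^{*} Du$ against the particular test function $\varphi\psi \in C_c(\mathbb{R}^d;\mathbb{R}^d)$. This is standard but deserves a line: by Fubini's theorem, $\int \eta\,d(\nabla u_n\mathcal{L}^d) = \int (\check\rho_n \ast \eta)\,dDu$ for $\eta\in C_c(\mathbb{R}^d;\mathbb{R}^d)$, where $\check\rho_n(x)=\rho_n(-x)$; since $\check\rho_n \ast \eta \to \eta$ uniformly (uniform continuity of $\eta$) and the supports are uniformly contained in a compact set, the conclusion follows from the finiteness of $|Du|$. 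With this ingredient the proof is complete.
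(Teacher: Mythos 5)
Your argument is correct. Note that the paper does not actually prove this proposition: it is quoted with a citation to Ambrosio--Fusco--Pallara (Proposition 3.2, p.~118), and your mollification argument --- classical Leibniz rule for $u_n=\rho_n\ast u$, the identity $\nabla u_n\,\mathcal{L}^d=\rho_n\ast Du$, and passage to the limit via $u_n\to u$ in $L^1$ together with weak-$*$ convergence of $\nabla u_n\,\mathcal{L}^d$ to $Du$ tested against $\varphi\psi\in C_c(\mathbb{R}^d;\mathbb{R}^d)$ --- is precisely the standard proof of that fact, with the one nonroutine point (the Fubini computation $\int \eta\cdot\nabla u_n\,dx=\int(\check\rho_n\ast\eta)\,dDu$) correctly identified and justified. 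The only cosmetic addition you might make is the one-line remark that $u\varphi\in L^1(\mathbb{R}^d)$ and that $\varphi\,Du+u\nabla\varphi\,\mathcal{L}^d$ has finite total variation (bounded by $\|\varphi\|_\infty|Du|(\mathbb{R}^d)+\|\nabla\varphi\|_\infty\|u\|_{L^1}$), so that the identity you verify against test fields indeed exhibits $u\varphi$ as an element of $BV(\mathbb{R}^d)$ with the stated derivative.
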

 One also has the coarea formula, whose proof can be found in \cite{AFP}, p.~144:  
 \begin{proposition}
 For $u \in BV(\mathbb{R}^d)$, the set $\{u>t\}$ has finite perimeter for almost every $t \in \mathbb{R}$ and
\begin{align*}
|Du|(\mathbb{R}^d) &= \int_{-\infty}^\infty |D\chi_{\{u>t\}}|(\mathbb{R}^d)\;dt \\
Du(\mathbb{R}^d) &= \int_{-\infty}^\infty D\chi_{\{u>t\}}(\mathbb{R}^d)\;dt.
\end{align*}
\end{proposition}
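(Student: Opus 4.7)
The plan is to establish both equalities of the coarea formula by proving the one-sided inequality $|Du|(\mathbb{R}^d)\leq \int_{-\infty}^{\infty}|D\chi_{\{u>t\}}|(\mathbb{R}^d)\,dt$ directly from the definition of the total variation via a layer-cake identity, and then the matching reverse inequality via a strict smooth approximation combined with the classical coarea formula in the smooth category.

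For the easy direction, I would start from the pointwise identity
\[
u(x)=\int_{0}^{\infty}\chi_{\{u>t\}}(x)\,dt-\int_{-\infty}^{0}\chi_{\{u<t\}}(x)\,dt,
\]
valid for every $u\in L^{1}(\mathbb{R}^d)$. For any test field $\Phi\in C^{1}_c(\mathbb{R}^d;\mathbb{R}^d)$ with $\|\Phi\|_{L^{\infty}}\leq 1$, Fubini's theorem (combined with $\int\operatorname{div}\Phi\,dx=0$, which gives $\int\chi_{\{u<t\}}\operatorname{div}\Phi\,dx=-\int\chi_{\{u>t\}}\operatorname{div}\Phi\,dx$ for all but the countably many $t$ with $|\{u=t\}|>0$) produces
\[
\int_{\mathbb{R}^d}u\,\operatorname{div}\Phi\,dx=\int_{-\infty}^{\infty}\int_{\mathbb{R}^d}\chi_{\{u>t\}}\,\operatorname{div}\Phi\,dx\,dt.
\]
Estimating the inner integral by $|D\chi_{\{u>t\}}|(\mathbb{R}^d)$ and passing to the supremum over admissible $\Phi$ yields $|Du|(\mathbb{R}^d)\leq\int_{-\infty}^{\infty}|D\chi_{\{u>t\}}|(\mathbb{R}^d)\,dt$; finiteness of the left-hand side then forces $\{u>t\}$ to have finite perimeter for a.e.~$t$.

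For the reverse inequality, I would invoke a Meyers--Serrin-type strict approximation to produce a sequence $u_n\in C^{\infty}(\mathbb{R}^d)\cap W^{1,1}(\mathbb{R}^d)$ with $u_n\to u$ in $L^{1}$ and $\int_{\mathbb{R}^d}|\nabla u_n|\,dx\to|Du|(\mathbb{R}^d)$. For smooth $u_n$, Sard's theorem guarantees that $\{u_n=t\}$ is a smooth hypersurface for a.e.~$t$, and the classical smooth coarea formula (via the area formula) gives $\int|\nabla u_n|\,dx=\int_{-\infty}^{\infty}|D\chi_{\{u_n>t\}}|(\mathbb{R}^d)\,dt$. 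After passing to a subsequence so that $u_n\to u$ pointwise a.e., the identity
\[
\int_{\mathbb{R}^d}|u_n-u|\,dx=\int_{-\infty}^{\infty}|\{u_n>t\}\triangle\{u>t\}|\,dt
\]
(valid at all $t$ with $|\{u=t\}|=0$) allows one to extract a further diagonal subsequence along which $\chi_{\{u_n>t\}}\to\chi_{\{u>t\}}$ in $L^{1}(\mathbb{R}^d)$ for a.e. $t$. Lower semicontinuity of the perimeter under $L^{1}$-convergence together with Fatou's lemma then give
\[
\int_{-\infty}^{\infty}|D\chi_{\{u>t\}}|(\mathbb{R}^d)\,dt \leq \liminf_{n\to\infty}\int_{-\infty}^{\infty}|D\chi_{\{u_n>t\}}|(\mathbb{R}^d)\,dt=|Du|(\mathbb{R}^d),
\]
which matches the previous inequality and completes the scalar identity. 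The vectorial version is obtained by the same layer-cake/Fubini manipulation applied to $\Phi=\eta_R e_j$ for a cutoff $\eta_R\nearrow 1$: each component of $Du(\mathbb{R}^d)$ and of $D\chi_{\{u>t\}}(\mathbb{R}^d)$ is recovered as $R\to\infty$, and the integrated identity survives this limit since the scalar bound already proved provides the integrable majorant required for dominated convergence.

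The principal technical obstacle is precisely the level-set convergence $\chi_{\{u_n>t\}}\to\chi_{\{u>t\}}$ in $L^{1}(\mathbb{R}^d)$ for a.e.~$t$, which is exactly what must hold in order to transfer the lower semicontinuity of the perimeter inside the integral over levels. The symmetric-difference identity above, combined with the elementary fact that $\{t:|\{u=t\}|>0\}$ is at most countable, is the key mechanism by which this obstruction is handled.
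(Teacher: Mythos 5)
The paper does not prove this proposition at all: it is quoted as background with the proof delegated to Ambrosio--Fusco--Pallara (p.~144, the coarea formula in $BV$), so there is no internal argument to compare against. Your plan is essentially that standard proof and is correct in outline: the easy inequality $|Du|(\mathbb{R}^d)\le\int_{-\infty}^{\infty}|D\chi_{\{u>t\}}|(\mathbb{R}^d)\,dt$ by the layer-cake identity plus Fubini; the reverse inequality by strict smooth approximation, the classical coarea formula for smooth functions, the identity $\int_{\mathbb{R}^d}|u_n-u|\,dx=\int_{-\infty}^{\infty}|\{u_n>t\}\triangle\{u>t\}|\,dt$ to extract a subsequence with $\chi_{\{u_n>t\}}\to\chi_{\{u>t\}}$ in $L^1(\mathbb{R}^d)$ for a.e.\ $t$, then lower semicontinuity of the perimeter and Fatou; and the vectorial identity by testing with $\eta_R e_j$ and dominated convergence, with the scalar identity supplying the majorant. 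Two small repairs: first, the clause ``finiteness of the left-hand side then forces $\{u>t\}$ to have finite perimeter for a.e.\ $t$'' is backwards --- from $|Du|(\mathbb{R}^d)\le\int|D\chi_{\{u>t\}}|(\mathbb{R}^d)\,dt$ with finite left side you conclude nothing about the right side; the a.e.\ finiteness follows from the reverse inequality you prove afterwards, and it is needed there (in the vectorial step, to rewrite $\int\chi_{\{u>t\}}\partial_j\eta_R\,dx$ as $-\int\eta_R\,d(D_j\chi_{\{u>t\}})$), so the order of the logic should be stated correctly. Second, before applying Fatou and Fubini in $t$ you should note the measurability of $t\mapsto|D\chi_{\{u>t\}}|(\mathbb{R}^d)$, obtained by realizing the perimeter as a supremum over a countable family of test fields, each pairing $t\mapsto\int\chi_{\{u>t\}}\operatorname{div}\Phi\,dx$ being measurable. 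Neither point is a genuine gap.
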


We also utilize some estimates and inequalities that involve the (centered) Hardy-Littlewood maximal function.  Here we recall its definition, which for a non-negative Radon measure $\mu$, is given by
\begin{align*}
\mathcal{M}(\mu)(x) := \sup_{r>0} \frac{1}{|B(x,r)|}\int_{\overline{B(x,r)}} d\mu.
\end{align*}
The Hardy-Littlewood maximal function enjoys several boundedness results that we emply here.  In particular, we require the standard weak-type estimate:
\begin{theorem}\label{weak-type}
There exists a constant $C=C(d)>0$ such that
\begin{align*}
\left|\left\{ x\in \mathbb{R}^d: \mathcal{M}(\mu)(x)>t\right\}\right| \leq \frac{C}{t} \int_{\mathbb{R}^d} \;d\mu
\end{align*}
for all $t>0$ and all non-negative Radon measures $\mu$.
\end{theorem}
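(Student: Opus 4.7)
The plan is to use the Vitali (5r)-covering lemma, following the classical proof of the Hardy--Littlewood maximal inequality adapted to Radon measures. Fix $t>0$ and set $E_t := \{x \in \mathbb{R}^d : \mathcal{M}(\mu)(x) > t\}$. By definition of the maximal function, for each $x \in E_t$ there exists a radius $r_x > 0$ such that
\begin{equation*}
\mu\bigl(\overline{B(x,r_x)}\bigr) > t\,|B(x,r_x)|.
\end{equation*}
Since $\mu$ is finite, we automatically have the uniform bound $|B(x,r_x)| < \mu(\mathbb{R}^d)/t$, so the radii $r_x$ are uniformly bounded from above. This is the key fact that makes the covering argument work.

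Next, I would fix an arbitrary compact set $K \subset E_t$ and extract a finite subcover from $\{B(x,r_x)\}_{x \in K}$. Applying the Vitali covering lemma to this finite family yields a pairwise disjoint subcollection $\{\overline{B(x_i,r_i)}\}_{i=1}^N$ with the property that
\begin{equation*}
K \subset \bigcup_{i=1}^N B(x_i, 5 r_i).
\end{equation*}
Using the doubling property of Lebesgue measure, the defining inequality for each chosen ball, and the disjointness of the $\overline{B(x_i,r_i)}$, we then compute
\begin{equation*}
|K| \leq \sum_{i=1}^N |B(x_i,5r_i)| = 5^d \sum_{i=1}^N |B(x_i,r_i)| < \frac{5^d}{t} \sum_{i=1}^N \mu\bigl(\overline{B(x_i,r_i)}\bigr) \leq \frac{5^d}{t}\, \mu(\mathbb{R}^d).
\end{equation*}
Taking the supremum over compact $K \subset E_t$ and invoking inner regularity of Lebesgue measure (the set $E_t$ is open because $x \mapsto \mu(\overline{B(x,r)})$ is upper semicontinuous for each fixed $r$, so $\mathcal{M}(\mu)$ is lower semicontinuous) yields the desired bound with $C = 5^d$.

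There is no real obstacle here; this is the textbook argument. The only subtle point worth flagging is the use of \emph{closed} balls in the definition of $\mathcal{M}(\mu)$, which ensures upper semicontinuity of $r \mapsto \mu(\overline{B(x,r)})$ and hence openness of $E_t$; without this one would have to argue with a slight enlargement, or replace the Vitali lemma by a Besicovitch-type covering theorem, which would produce a dimension-dependent constant in place of $5^d$ but would otherwise deliver the same conclusion.
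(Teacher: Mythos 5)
Your argument is the standard Vitali covering proof, which is exactly what the paper does: it simply cites Stein's \emph{Singular Integrals}, p.~6, for this. The covering argument itself is sound (and since you extract a finite subcover first, the $3r$-version of Vitali would even give $C=3^d$ rather than $5^d$, though that hardly matters).

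One small logical slip worth correcting: you assert that because $x \mapsto \mu(\overline{B(x,r)})$ is \emph{upper} semicontinuous for each fixed $r$, the supremum $\mathcal{M}(\mu)$ is \emph{lower} semicontinuous. That implication is a non-sequitur; a supremum of upper semicontinuous functions is neither u.s.c.\ nor l.s.c.\ in general. The conclusion that $E_t$ is open is nevertheless true with closed balls, but it requires a direct argument: if $\mu(\overline{B(x_0,r_0)}) > t\,|B(x_0,r_0)|$, then for $|x - x_0| < \delta$ one has $\overline{B(x_0,r_0)} \subset \overline{B(x, r_0 + \delta)}$, hence $\mu(\overline{B(x,r_0+\delta)}) \geq \mu(\overline{B(x_0,r_0)}) > t\,|B(x_0,r_0)|$, and since $|B(x,r_0+\delta)| \to |B(x_0,r_0)|$ as $\delta \to 0$ the strict inequality survives for small $\delta$, giving $\mathcal{M}(\mu)(x) > t$. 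Alternatively, you don't actually need openness at all: Lebesgue measurability of $E_t$ (guaranteed since $\mathcal{M}(\mu)$ is Borel, being a supremum over rational radii by right-continuity of $r \mapsto \mu(\overline{B(x,r)})$) is enough to invoke inner regularity by compact sets.
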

The proof follows the standard one for functions in $L^1(\mathbb{R}^d)$, see for example \cite{Sharmonic}, p.~6.  In the introduction we asserted that one has the following bound for the Hardy-Littlewood maximal function in the Lorentz spaces (see Grafakos \cite{grafakos}, p.~56, Theorem 1.4.19):
\begin{theorem}
Let $1<q<+\infty$ and $0<r<+\infty$.  There exists a constant $C=C(r,q,d)>0$ such that
\begin{align*}
|||\mathcal{M}(f)|||_{L^{q,r}(\mathbb{R}^d)} \leq C |||f|||_{L^{q,r}(\mathbb{R}^d)} 
\end{align*}
for all $f \in L^{q,r}(\mathbb{R}^d)$.
\end{theorem}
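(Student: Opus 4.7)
The plan is to deduce the theorem from the classical pointwise rearrangement inequality of Herz,
\begin{align*}
(\mathcal{M}f)^*(t) \leq C_d\, f^{**}(t) \qquad \text{for all } t > 0,
\end{align*}
combined with the one-dimensional Hardy inequality on $(0,\infty)$. The former converts the weak-type $(1,1)$ bound of Theorem~\ref{weak-type} into a pointwise statement about rearrangements; the latter then lets us trade the $f^{**}$-based right-hand side for the smaller $f^*$-based quasi-norm that appears on the right-hand side of the theorem.

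To derive Herz's inequality, I would argue by truncation. For each $\lambda > 0$ decompose $f = g_\lambda + h_\lambda$, where $g_\lambda := f\chi_{\{|f| > \lambda/2\}}$; then $|h_\lambda| \leq \lambda/2$ pointwise, so $\mathcal{M}h_\lambda \leq \lambda/2$, and therefore $\{\mathcal{M}f > \lambda\} \subseteq \{\mathcal{M}g_\lambda > \lambda/2\}$. Applying the weak-type $(1,1)$ bound of Theorem~\ref{weak-type} to the measure $|g_\lambda|\mathcal{L}^d$ gives
\begin{align*}
|\{\mathcal{M}f > \lambda\}| \leq \frac{2C_d}{\lambda} \int_{\{|f| > \lambda/2\}}|f|\,dx,
\end{align*}
and estimating the right-hand integral by the rearrangement ($\int_{\{|f|>\lambda/2\}} |f|\,dx = \int_0^{|\{|f|>\lambda/2\}|} f^*(s)\,ds$), followed by inverting the distribution function, produces the Herz bound with a dimensional constant.

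With Herz's inequality in hand, insertion into the definition of the quasi-norm gives
\begin{align*}
|||\mathcal{M}f|||_{L^{q,r}}^r \leq C_d^r \int_0^\infty \bigl(t^{1/q} f^{**}(t)\bigr)^r \frac{dt}{t} = C_d^r \int_0^\infty \left(t^{1/q - 1}\int_0^t f^*(s)\,ds\right)^r \frac{dt}{t}.
\end{align*}
Since $1/q < 1$, the exponent $\beta := 1 - 1/q$ is positive, and the classical Hardy inequality on $(0,\infty)$ (valid for $0 < r < \infty$ and nonnegative nonincreasing integrands such as $f^*$) yields
\begin{align*}
\int_0^\infty \left(t^{1/q - 1}\int_0^t f^*(s)\,ds\right)^r \frac{dt}{t} \leq C(r,\beta) \int_0^\infty \bigl(t^{1/q} f^*(t)\bigr)^r \frac{dt}{t} = C(r,\beta) |||f|||_{L^{q,r}}^r,
\end{align*}
which completes the argument.

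The main obstacle is really the Herz pointwise rearrangement inequality; once it is in place, everything else amounts to routine manipulation of the definitions. It is worth noting that the Hardy constant $C(r,\beta)$ blows up as $\beta = 1 - 1/q \to 0^+$, which is consistent with the well-known failure of the maximal function to be bounded on $L^1(\mathbb{R}^d)$ and hence on $L^{1,r}(\mathbb{R}^d)$ for $r < +\infty$.
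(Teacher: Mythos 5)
The paper does not prove this statement at all; it simply cites Grafakos (Theorem 1.4.19, p.~56) for it, so there is no in-paper argument to compare against. Your proposal is a correct and classical proof, and in fact makes the result more self-contained than the paper does.

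The two ingredients you use are exactly the standard ones: the Herz rearrangement inequality $(\mathcal{M}f)^*(t)\leq C_d\, f^{**}(t)$, and the one-dimensional Hardy inequality with power weight. For the Herz step, your sketch via truncation at height $\lambda/2$ gives the distributional bound $|\{\mathcal{M}f>\lambda\}|\leq \frac{2C_d}{\lambda}\int_{\{|f|>\lambda/2\}}|f|$, but the phrase ``followed by inverting the distribution function'' hides the one step that needs care: a direct inversion with $\lambda = 2f^*(t)$ gives $|\{\mathcal{M}f>\lambda\}|\leq \frac{C_d\,t\,f^{**}(t)}{f^*(t)}$, which need not be $\leq t$. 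The clean way to finish is to observe that, with the same decomposition and $\lambda = 2f^*(t)$, one has $\mathcal{M}f \leq \mathcal{M}g_\lambda + f^*(t)$ pointwise, so
\begin{align*}
(\mathcal{M}f)^*(t) \leq (\mathcal{M}g_\lambda)^*(t) + f^*(t) \leq \frac{C_d}{t}\|g_\lambda\|_{L^1} + f^*(t) \leq \frac{C_d}{t}\int_0^t f^*(s)\,ds + f^{**}(t) \leq (C_d+1)\,f^{**}(t),
\end{align*}
using $m(f,f^*(t))\leq t$ in the penultimate step. This is a minor presentational point, not a gap. For the Hardy step, you correctly identify that the classical inequality covers $r\geq 1$, and that for $0<r<1$ one must invoke the version for nonincreasing integrands (which $f^*$ is). This caveat is essential: without it the argument would silently fail precisely on the range $0<r<1$ that the theorem asserts, and which is needed later in the paper (Theorem~3.1 is applied with $r=\alpha<1$). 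Your remark that the constant degenerates as $q\to 1^+$ is also accurate and consistent with the unboundedness of $\mathcal{M}$ on $L^1$.
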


\section{Several Lemmas}\label{three}
In this section we present the details of several estimates that we utilize in the proof of our main results.  The first is the following non-standard estimate for the Hardy-Littlewood maximal function, which is a variant of the bound on a Lorentz space $L^{q,r}(\mathbb{R}^d)$ for $1<q<+\infty$ and $r<1$. 
\begin{theorem}\label{strong-type}
Let $1<q<+\infty$ and $0<r<+\infty$.  There exists a constant $C=C(r,q,d)>0$ such that
\begin{align*}
|||\mathcal{M}(f)|||_{L^{q,r}(\mathbb{R}^d)} \leq C \|f\|_{L^\infty(\mathbb{R}^d)}^{1-1/q}  \|f\|_{L^1(\mathbb{R}^d)}^{1/q}
\end{align*}
for every $f \in L^1(\mathbb{R}^d) \cap L^\infty(\mathbb{R}^d)$.
\end{theorem}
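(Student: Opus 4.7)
The plan is to use the distribution-function formulation of the quasi-norm (as given in the proposition just before the Lemma's statement) together with the two standard bounds on $\mathcal{M}$: the weak-type $(1,1)$ estimate recorded as Theorem~\ref{weak-type}, and the trivial $L^\infty$ bound $\|\mathcal{M}(f)\|_{L^\infty(\mathbb{R}^d)} \leq \|f\|_{L^\infty(\mathbb{R}^d)}$, which is immediate from the definition of $\mathcal{M}$. This is essentially the Marcinkiewicz interpolation idea, but we keep $\|f\|_{L^1}$ and $\|f\|_{L^\infty}$ separated rather than combining them via an interpolation parameter, which is what yields the sharp homogeneity on the right-hand side.

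First, I would use the proposition expressing the quasi-norm via the distribution function to write
\[
|||\mathcal{M}(f)|||_{L^{q,r}(\mathbb{R}^d)}^r = q \int_0^\infty \left(t\, |\{\mathcal{M}(f)>t\}|^{1/q}\right)^r \frac{dt}{t}.
\]
Second, since $\|\mathcal{M}(f)\|_{L^\infty(\mathbb{R}^d)} \leq \|f\|_{L^\infty(\mathbb{R}^d)}$, the set $\{\mathcal{M}(f) > t\}$ is empty for $t > \|f\|_{L^\infty(\mathbb{R}^d)}$, so the range of integration is in fact $(0,\|f\|_{L^\infty(\mathbb{R}^d)})$.

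Third, on this truncated range I would apply Theorem~\ref{weak-type} to get $|\{\mathcal{M}(f)>t\}| \leq C t^{-1}\|f\|_{L^1(\mathbb{R}^d)}$, so that the integrand is controlled by
\[
C^{r/q}\, \|f\|_{L^1(\mathbb{R}^d)}^{r/q}\, t^{r(1-1/q)-1}.
\]
Because $q>1$, the exponent $r(1-1/q)-1$ is strictly greater than $-1$, so the $t$-integral is convergent on $(0,\|f\|_{L^\infty(\mathbb{R}^d)})$ and evaluates to $\|f\|_{L^\infty(\mathbb{R}^d)}^{r(1-1/q)}/(r(1-1/q))$. Taking $r$-th roots and collecting constants into a $C=C(r,q,d)$ gives the desired inequality with the correct exponents $1/q$ and $1-1/q$ on $\|f\|_{L^1}$ and $\|f\|_{L^\infty}$ respectively.

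There is no real obstacle here: the only thing to notice is that the $L^\infty$ truncation is essential — without it the $t$-integral with only the weak-$(1,1)$ bound diverges at $+\infty$ whenever $q>1$ — and that the condition $q>1$ (rather than $q\geq 1$) is exactly what makes the integral near $t=0$ convergent after absorbing one factor of $t^{-1/q}$ from the weak bound.
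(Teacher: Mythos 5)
Your proposal is correct and follows essentially the same argument as the paper's proof: truncate the distribution-function integral at $\|f\|_{L^\infty}$ using the pointwise bound $\mathcal{M}(f)\leq\|f\|_{L^\infty}$, then apply the weak-$(1,1)$ estimate and integrate. The remarks on why $q>1$ and the truncation are essential are also accurate.
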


\begin{proof}
From the definition we have
\begin{align*}
|||\mathcal{M}(f)|||_{L^{q,r}(\mathbb{R}^d)} = q^{1/r} \left(\int_0^\infty \left(t |\{ \mathcal{M}(f)>t\}|^{1/q}\right)^{r} \frac{dt}{t}\right)^{1/r}.
\end{align*}
As the Hardy-Littlewood maximal function satisfies the pointwise $L^\infty(\mathbb{R}^d)$ bound
\begin{align*}
\mathcal{M}(f) \leq \|f\|_{L^\infty(\mathbb{R}^d)}, 
\end{align*}
we find
\begin{align*}
q^{1/r} \left(\int_0^\infty \left(t |\{ \mathcal{M}(f)>t\}|^{1/q}\right)^{r} \frac{dt}{t}\right)^{1/r} = q^{1/r} \left(\int_0^{\|f\|_{L^\infty(\mathbb{R}^d)}} \left(t |\{ \mathcal{M}(f) >t\}|^{1/q}\right)^{r} \frac{dt}{t}\right)^{1/r}.
\end{align*}
Then as the standard weak-type estimate stated in Theorem \ref{weak-type} asserts
\begin{align*}
|\{ \mathcal{M}(f) >t\}| \leq \frac{C}{t} \int_{\mathbb{R}^d} |f|,
\end{align*}
we have
\begin{align*}
|||\mathcal{M}(f)|||_{L^{q,r}(\mathbb{R}^d)} &\leq q^{1/r} \left(\int_0^{\|f\|_{L^\infty(\mathbb{R}^d)}} \left(t \left(\frac{C}{t} \int_{\mathbb{R}^d} |f|\right)^{1/q}\right)^{r} \frac{dt}{t}\right)^{1/r} \\
&= 2Cq^{1/r}  \left(\int_0^{\|f\|_{L^\infty(\mathbb{R}^d)}} t^{r(1-1/q)-1} \left(\int_{\mathbb{R}^d} |f|\right)^{r/q}\;dt\right)^{1/r} \\
&=\frac{2Cq^{1/r}}{(r(1-1/q))^{1/r}} \|f\|_{L^\infty(\mathbb{R}^d)}^{1-1/q}\|f\|_{L^1(\mathbb{R}^d)}^{1/q}
\end{align*}
which completes the proof.
\end{proof} 

A key component of our argument is the following pointwise interpolation inequality for smooth functions, which in the $W^{1,1}(\mathbb{R}^d)$ case has been asserted in the paper of Maz'ya and Shaposhnikova \cite{mazyashaposhnikova}:
\begin{lemma}\label{mazyaslemma}
Let $\alpha \in (0,1)$.  There exists a constant $C=C(\alpha,d)>0$ such that \begin{align*}
\int_{\mathbb{R}^d} \frac{|u(x)-u(y)|}{|x-y|^{d+1-\alpha}}\;dy \leq C \left(\mathcal{M}(|\nabla u|)(x)\right)^{1-\alpha}  \left(\mathcal{M}(u)(x)\right)^{\alpha}
\end{align*}
for every smooth function $u \in W^{1,1}(\mathbb{R}^d)$.
\end{lemma}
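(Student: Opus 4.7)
The natural approach is a standard Hedberg-type splitting followed by optimization of a free parameter $r>0$. I will fix $x\in\mathbb{R}^d$ (assume both maximal functions are finite and nonzero; the degenerate cases are easy) and write
\[
\int_{\mathbb{R}^d} \frac{|u(x)-u(y)|}{|x-y|^{d+1-\alpha}}\,dy = \int_{B(x,r)} + \int_{\mathbb{R}^d\setminus B(x,r)}.
\]

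For the near part, I would use the pointwise bound $|u(x)-u(y)|\leq \int_0^{|x-y|}|\nabla u(x+s\omega)|\,ds$ where $\omega=(y-x)/|y-x|$. Passing to polar coordinates centered at $x$ and applying Fubini converts the near-field integral into
\[
\int_{B(x,r)} \frac{|u(x)-u(y)|}{|x-y|^{d+1-\alpha}}\,dy \,\leq\, \frac{1}{1-\alpha}\int_{B(x,r)} \frac{|\nabla u(y)|}{|x-y|^{d-\alpha}}\,dy.
\]
This last integral is a truncated Riesz potential of $|\nabla u|$, and the usual dyadic annular decomposition $B(x,r)=\bigsqcup_{k\geq 0}\bigl(B(x,2^{-k}r)\setminus B(x,2^{-k-1}r)\bigr)$ together with the definition of $\mathcal{M}(|\nabla u|)(x)$ gives the bound $C_\alpha r^{\alpha}\mathcal{M}(|\nabla u|)(x)$, the geometric series converging because $\alpha>0$.

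For the far part, I would simply use $|u(x)-u(y)|\leq |u(x)|+|u(y)|$. The contribution of $|u(x)|$ is $|u(x)|\int_{B(x,r)^c}|x-y|^{-d-1+\alpha}dy$, which equals a constant times $r^{\alpha-1}|u(x)|\leq r^{\alpha-1}\mathcal{M}(u)(x)$ (since $\alpha<1$). The contribution of $|u(y)|$ is handled by the dyadic decomposition $B(x,r)^c=\bigsqcup_{k\geq 0}\bigl(B(x,2^{k+1}r)\setminus B(x,2^{k}r)\bigr)$; on each annulus, one bounds $\int_{B(x,2^{k+1}r)}|u(y)|\,dy$ by $|B(x,2^{k+1}r)|\,\mathcal{M}(u)(x)$, pulls out $|x-y|^{-d-1+\alpha}\leq (2^k r)^{-d-1+\alpha}$, and sums the geometric series (which converges since $\alpha<1$) to obtain $C_\alpha r^{\alpha-1}\mathcal{M}(u)(x)$.

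Combining the two pieces yields
\[
\int_{\mathbb{R}^d}\frac{|u(x)-u(y)|}{|x-y|^{d+1-\alpha}}\,dy \,\leq\, C_\alpha\bigl(r^{\alpha}\mathcal{M}(|\nabla u|)(x)+r^{\alpha-1}\mathcal{M}(u)(x)\bigr),
\]
valid for every $r>0$. The final step is to optimize by choosing $r=\mathcal{M}(u)(x)/\mathcal{M}(|\nabla u|)(x)$, which balances the two terms and produces $C_\alpha\,\mathcal{M}(|\nabla u|)(x)^{1-\alpha}\,\mathcal{M}(u)(x)^{\alpha}$, as desired. The only subtlety to watch for is the dependence of the constants on $\alpha$: the first dyadic sum carries a factor $(1-2^{-\alpha})^{-1}$ and the second $(1-2^{\alpha-1})^{-1}$, both finite on $(0,1)$, and the far-field $|u(x)|$ contribution produces a $1/(1-\alpha)$ factor — none of which blow up in the interior of the range. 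The degenerate endpoint cases $\mathcal{M}(u)(x)=0$ (forcing $u\equiv 0$ near $x$, so the LHS vanishes) and $\mathcal{M}(|\nabla u|)(x)=0$ (taking $r\to\infty$ in the inequality) are handled separately by inspection.
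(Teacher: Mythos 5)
Your proposal is correct and follows the same Hedberg-style blueprint as the paper: split at a free radius $r$, bound the near piece by $r^{\alpha}\mathcal{M}(|\nabla u|)(x)$ and the far piece by $r^{\alpha-1}\mathcal{M}(u)(x)$, then optimize at $r=\mathcal{M}(u)(x)/\mathcal{M}(|\nabla u|)(x)$. The one genuinely different ingredient is your handling of the near piece. You reduce
\[
\int_{B(x,r)}\frac{|u(x)-u(y)|}{|x-y|^{d+1-\alpha}}\,dy \;\leq\; \frac{1}{1-\alpha}\int_{B(x,r)}\frac{|\nabla u(y)|}{|x-y|^{d-\alpha}}\,dy
\]
directly via the fundamental theorem of calculus along rays, polar coordinates, and Fubini, in effect re-proving a truncated Hardy inequality on the ball and then feeding the truncated Riesz potential into the usual dyadic-annuli estimate. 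The paper instead introduces a cutoff $\varphi\in C_c^\infty(B(x,2r))$ equal to $1$ on $B(x,r)$ with $\|\nabla\varphi\|_\infty\lesssim 1/r$, applies the global fractional Hardy inequality (Maz'ya, eq.~1.3.3) to $\varphi u$, and then the Leibniz rule, which produces an extra boundary term $\frac{C}{r}\int_{B(x,2r)\setminus B(x,r)}|u(y)|\,|x-y|^{\alpha-d}\,dy$ that must also be bounded by $r^{\alpha-1}\mathcal{M}(u)(x)$. Your route is a bit more elementary and self-contained: no cutoff, no extra term, no appeal to an external Hardy inequality. The far pieces are handled with only cosmetic differences (you split $|u(x)-u(y)|\leq|u(x)|+|u(y)|$ and use $|u(x)|\leq\mathcal{M}(u)(x)$, legitimate since $u$ is smooth; the paper keeps the difference and uses $\mathcal{M}(u-u(x))(x)\leq 2\mathcal{M}(u)(x)$). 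Both arguments are sound and land on the same estimate.
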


 \begin{proof}
We split the integral into two pieces 
\begin{align*}
\int_{\mathbb{R}^d} \frac{|u(x)-u(y)|}{|x-y|^{d+1-\alpha}}\;dy&= \int_{B(x,r)} \frac{|u(x)-u(y)|}{|x-y|^{d+1-\alpha}}\;dy + \int_{B(x,r)^c} \frac{|u(x)-u(y)|}{|x-y|^{d+1-\alpha}}\;dy 
=: I+II.
\end{align*}
Now, for $I$ we let $\varphi \in C^\infty_c(B(x,2r))$ be a cutoff function such that $\varphi \equiv 1$ on $B(x,r)$ and $\|\nabla \varphi \|_{L^\infty(B(x,2r))} \leq \frac{C}{r}$.  Then by Hardy's inequality (\cite{Mazya:2011}, Equation 1.3.3) and the assumptions on the support of $\varphi$ we have
\begin{align*}
I &= \int_{B(x,r)} \frac{|(\varphi u)(x)-(\varphi u)(y)|}{|x-y|^{d+1-\alpha}}\;dy \\
&\leq \int_{\mathbb{R}^d} \frac{|(\varphi u)(x)-(\varphi u)(y)|}{|x-y|^{d+1-\alpha}}\;dy \\
&\leq C_1\int_{\mathbb{R}^d} \frac{|\nabla (\varphi u)(y)|}{|x-y|^{d-\alpha}}\;dy\\
&= C_1\int_{B(x,2r)} \frac{|\nabla (\varphi u)(y)|}{|x-y|^{d-\alpha}}\;dy.
\end{align*}
However, now the Leibniz rule, the $L^\infty(\mathbb{R}^d)$ bound on the derivative of $\varphi$, and the fact that $\nabla \varphi=0$ in $B(x,r)$ implies
\begin{align*}
I &\leq C_1\int_{B(x,2r)} \frac{|\nabla u(y)|}{|x-y|^{d-\alpha}}\;dy + C_1\int_{B(x,2r)} \frac{|\nabla \varphi (y)|  |u(y)|}{|x-y|^{d-\alpha}}\;dy \\
&\leq C_1\int_{B(x,2r)} \frac{|\nabla u(y)|}{|x-y|^{d-\alpha}}\;dy + \frac{C_1'}{r}\int_{B(x,2r) \setminus B(x,r)} \frac{ |u(y)|}{|x-y|^{d-\alpha}}\;dy \\
&=: III+IV.
\end{align*}
Concerning $III$, we apply the idea of Hedberg \cite{Hedberg} to make estimates on dyadic annuli:
\begin{align*}
III &\leq C_1 \sum_{i=-1}^\infty \int_{B(x,r/2^i)\setminus B(x,r/2^{i+1})} \frac{|\nabla u(y)|}{|x-y|^{d-\alpha}}\;dy \\
&\leq C_1 \sum_{i=-1}^\infty \left(r/2^{i+1}\right)^{\alpha-d} \int_{B(x,r/2^i)} |\nabla u(y)|\;dy\\
&= C_1 \sum_{i=-1}^\infty \left(r/2^{i+1}\right)^{\alpha-d} |B(0,1)| \left(r/2^{i}\right)^{d}  \fint_{B(x,r/2^i)} |\nabla u(y)|\;dy \\
&\leq C_1 \sum_{i=-1}^\infty \left(r/2^{i+1}\right)^{\alpha-d} |B(0,1)| \left(r/2^{i}\right)^{d} \mathcal{M}(|\nabla u|)(x).
\end{align*}
As one can sum the infinite series, we arrive at the estimate
\begin{align*}
III \leq C_2 r^\alpha \mathcal{M}(|\nabla u|)(x).
\end{align*}
For $IV$, we have
\begin{align*}
IV &= \frac{C_1'}{r}\int_{B(x,2r) \setminus B(x,r)} \frac{ |u(y)|}{|x-y|^{d-\alpha}}\;dy \\
&\leq \frac{C_1'}{r^{1-\alpha}} |B(0,1)| 2^d \fint_{B(x,2r)} |u(y)|\;dy \\
&\leq C_3 r^{\alpha-1} \mathcal{M}(u)(x),
\end{align*}
which shows
\begin{align*}
I \leq C_2 r^\alpha \mathcal{M}(|\nabla u|)(x)+C_3 r^{\alpha-1} \mathcal{M}(u)(x).
\end{align*}
Finally, we return to $II$ an apply the idea of Hedberg again, this time for large balls:
\begin{align*}
II &= \int_{B(x,r)^c} \frac{|u(x)-u(y)|}{|x-y|^{d+1-\alpha}}\;dy \\
&= \sum_{i=0}^\infty \int_{B(x,2^{i+1}r) \setminus B(x,2^i r)} \frac{|u(x)-u(y)|}{|x-y|^{d+1-\alpha}}\;dy \\
&\leq \sum_{i=0}^\infty (2^i r)^{-d-1+\alpha} \int_{B(x,2^{i+1}r) \setminus B(x,2^i r)} |u(x)-u(y)|\;dy \\
&\leq \sum_{i=0}^\infty (2^i r)^{-d-1+\alpha} |B(0,1)| (2^{i+1}r)^d \fint_{B(x,2^{i+1}r)} |u(x)-u(y)|\;dy \\
&\leq \sum_{i=0}^\infty (2^i r)^{-d-1+\alpha} |B(0,1)| (2^{i+1}r)^d \mathcal{M}(u-u(x))(x)
\end{align*}
In particular, we deduce
\begin{align*}
II &\leq C_4 r^{\alpha-1} \mathcal{M}(u-u(x))(x) \\
&\leq 2C_4 r^{\alpha-1} \mathcal{M}(u)(x).
\end{align*}
The result follows from optimizing in $r$, for example with the choice 
\begin{align*}
r= \frac{\mathcal{M}(u)(x)}{\mathcal{M}(|\nabla u|)(x)}.
\end{align*}
\end{proof}

We are now prepared to prove Lemma \ref{isoperimetricinequality}.

\begin{proof}[Proof of Lemma \ref{isoperimetricinequality}]
Let us begin by observing that by Lemma \ref{mazyaslemma} for $u \in C^\infty(\mathbb{R}^d) \cap W^{1,1}(\mathbb{R}^d)$ we have
\begin{align*}
\mathcal{D}^{1-\alpha}(u)(x) &= \int_{\mathbb{R}^d} \frac{|u(x)-u(y)|}{|x-y|^{d+1-\alpha}}\;dy \\
&\leq C \left(\mathcal{M}(|\nabla u|)(x)\right)^{1-\alpha}  \left(\mathcal{M}(u)(x)\right)^{\alpha}.
\end{align*}
Thus we find
\begin{align*}
\|  \mathcal{D}^{1-\alpha}(u) \|_{L^{d/(d-\alpha),1}(\mathbb{R}^d)} &\leq C \| \left(\mathcal{M}(|\nabla u|)(\cdot)\right)^{1-\alpha}  \left(\mathcal{M}(u)(\cdot)\right)^{\alpha} \|_{L^{d/(d-\alpha),1}(\mathbb{R}^d)},
\end{align*}
which in turn by H\"older's inequality in the Lorentz spaces (Theorem \ref{holder} from Section \ref{preliminaries}) we implies
\begin{align*}
\|  \mathcal{D}^{1-\alpha}(u) \|_{L^{d/(d-\alpha),1}(\mathbb{R}^d)} \leq \|C \left(\mathcal{M}(|\nabla u |)\right)^{1-\alpha}\|_{L^{1/(1-\alpha),\infty}(\mathbb{R}^d)} \|\mathcal{M}(u)^\alpha \|_{L^{ d /\alpha(d-1),1}(\mathbb{R}^d)}
\end{align*}
as one checks that
\begin{align*}
\frac{1}{\frac{d}{d-\alpha}} = \frac{d-\alpha}{d} &= 1-\alpha + \alpha - \frac{\alpha}{d} \\
&= \frac{1}{1-\alpha} + \frac{1}{\frac{d}{\alpha(d-1)}}.
\end{align*}
Note here it is crucial that $d>1$.  Next we estimate this from above with the equivalent norm from Section \ref{preliminaries} to observe that
\begin{align*}
\|  \mathcal{D}^{1-\alpha}(u) \|_{L^{d/(d-\alpha),1}(\mathbb{R}^d)}\leq \frac{Cd}{\alpha(d(1-\alpha)+\alpha))} ||| \mathcal{M}(|\nabla u|)^{1-\alpha} |||_{L^{1/(1-\alpha),\infty}(\mathbb{R}^d)} |||\mathcal{M}(u)^\alpha |||_{L^{ d /\alpha(d-1),1}(\mathbb{R}^d)}
\end{align*}
Then the scaling properties of the Lorentz spaces (see Section \ref{preliminaries}), which one has with this equivalent norm, imply 
\begin{align*}
||| \mathcal{M}(|\nabla u|)^{1-\alpha}|||_{L^{1/(1-\alpha),\infty}(\mathbb{R}^d)} &=  ||| \mathcal{M}(|\nabla u|)|||^{1-\alpha}_{L^{1,\infty}(\mathbb{R}^d)} \\
|||\mathcal{M}(u)^\alpha |||_{L^{ d /\alpha(d-1),1}(\mathbb{R}^d)} &= |||\mathcal{M}(u) |||^\alpha_{L^{ d /(d-1),\alpha}(\mathbb{R}^d)}
\end{align*}
Now, the weak-type estimate for the Hardy-Littlewood maximal function recorded in Theorem \ref{weak-type} and the strong-type estimate on the Lorentz space $L^{ d /(d-1),\alpha}(\mathbb{R}^d)$ proven in Theorem \ref{strong-type} (and here note that $\alpha<1$!) implies
\begin{align*}
\| \mathcal{D}^{1-\alpha}(u)\|_{L^{d/(d-\alpha),1}(\mathbb{R}^d)} &\leq C' \left(\int_{\mathbb{R}^d} |\nabla u|\;dx\right)^{1-\alpha}  \|u\|^{\alpha/d}_{L^\infty(\mathbb{R}^d)}  \|u\|_{L^1(\mathbb{R}^d)}^{\alpha(1-1/d)}.
\end{align*}
Now for a set of finite perimeter $E$, define $u_n:= \chi_E \ast \rho_n$ for a sequence of standard mollifiers $\rho_n$.  Then as $u_n \in C^\infty(\mathbb{R}^d) \cap W^{1,1}(\mathbb{R}^d)$, the preceding argument implies
\begin{align*}
\|  \mathcal{D}^{1-\alpha}(u_n)\|_{L^{d/(d-\alpha),1}(\mathbb{R}^d)} &\leq C' \left(\int_{\mathbb{R}^d} |\nabla u_n|\;dx\right)^{1-\alpha}  \|u_n\|^{\alpha/d}_{L^\infty(\mathbb{R}^d)}  \|u_n\|_{L^1(\mathbb{R}^d)}^{\alpha(1-1/d)}.
\end{align*}
We now observe that, up to a subsequence, one has the bound and convergences
\begin{enumerate}
\item[a.]$\|u_n\|_{L^\infty(\mathbb{R}^d)} \leq 1,$
\item[b.]$u_n \to \chi_E \text{ strongly in } L^1(\mathbb{R}^d),$
\item[c.]$\int_{\mathbb{R}^d} |\nabla u_n| \to Per(E),$
\item[d.] $u_n \to u$ pointwise almost everywhere in $\mathbb{R}^d$
\end{enumerate}
and thus Fatou's lemma implies
\begin{align*}
\|  \mathcal{D}^{1-\alpha}(\chi_E) \|_{L^{d/(d-\alpha),1}(\mathbb{R}^d)} &\leq
\liminf_{n \to \infty}\|   \mathcal{D}^{1-\alpha}(u_n)\|_{L^{d/(d-\alpha),1}(\mathbb{R}^d)} \\
&\leq C' \left( Per(E) \right)^{1-\alpha}  |E|^{\alpha(1-1/d)},
\end{align*}
which is the thesis.
\end{proof}

\section{Proofs of the Main Results}\label{proof}
Let us first prove the following theorem, which is the stronger result referred to in the introduction.
\begin{theorem}\label{interpolationinequalityprime}
Let $\alpha \in (0,1)$.  There exists a constant $C=C(\alpha,d)>0$ such that
\begin{align*}
\|\mathcal{D}^{1-\alpha}(u)\|_{L^{d/(d-\alpha),1}(\mathbb{R}^d;\mathbb{R}^d)} \leq C \|D u\|^{1-\alpha}_{L^1(\mathbb{R}^d;\mathbb{R}^d)} \|u\|_{L^{d/(d-1),1}(\mathbb{R}^d)}^\alpha
\end{align*}
for all $u \in BV(\mathbb{R}^d)$.
\end{theorem}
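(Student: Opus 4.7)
The plan is to reduce the statement to the isoperimetric inequality of Lemma~\ref{isoperimetricinequality} via a layer--cake decomposition in the spirit of Maz'ya~\cite{mazya}. First I will observe the pointwise identity
\[
|u(x)-u(y)| = \int_{-\infty}^\infty |\chi_{\{u>t\}}(x) - \chi_{\{u>t\}}(y)|\, dt,
\]
which holds because the integrand is the indicator of the interval between $u(y)$ and $u(x)$. Dividing by $|x-y|^{d+1-\alpha}$ and applying Fubini yields the representation $\mathcal{D}^{1-\alpha}(u)(x) = \int_{-\infty}^\infty \mathcal{D}^{1-\alpha}(\chi_{\{u>t\}})(x)\, dt$. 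Since $d/(d-\alpha)>1$, the Lorentz norm $\|\cdot\|_{L^{d/(d-\alpha),1}}$ from Section~\ref{preliminaries} is genuinely subadditive (being built from $f^{**}$), so Minkowski's integral inequality delivers
\[
\|\mathcal{D}^{1-\alpha}(u)\|_{L^{d/(d-\alpha),1}} \leq \int_{-\infty}^\infty \|\mathcal{D}^{1-\alpha}(\chi_{\{u>t\}})\|_{L^{d/(d-\alpha),1}}\, dt.
\]

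Next I will apply Lemma~\ref{isoperimetricinequality} to each slice. For $t\geq 0$ the set $\{u>t\}$ has finite measure since $u \in L^1$ and has finite perimeter for a.e.\ $t$ by the coarea formula. For $t<0$ the set $\{u>t\}$ has infinite measure, but $\mathcal{D}^{1-\alpha}(\chi_E)=\mathcal{D}^{1-\alpha}(\chi_{E^c})$ since only differences appear, so I may replace $\{u>t\}$ by its complement $\{u\leq t\}$, which has finite measure and the same perimeter. Writing $F_t$ for whichever representative has finite measure, Lemma~\ref{isoperimetricinequality} furnishes the slice bound
\[
\|\mathcal{D}^{1-\alpha}(\chi_{\{u>t\}})\|_{L^{d/(d-\alpha),1}} \leq C\, Per(F_t)^{1-\alpha}|F_t|^{\alpha(d-1)/d}.
\]

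The final step is to recombine the slices using Hölder's inequality in $t$ with conjugate exponents $1/(1-\alpha)$ and $1/\alpha$:
\[
\int_{-\infty}^\infty Per(F_t)^{1-\alpha}|F_t|^{\alpha(d-1)/d}\, dt \leq \left(\int_{-\infty}^\infty Per(F_t)\, dt\right)^{1-\alpha}\left(\int_{-\infty}^\infty |F_t|^{(d-1)/d}\, dt\right)^{\alpha}.
\]
The coarea formula from Section~\ref{preliminaries} identifies the first factor with $\|Du\|_{L^1}^{1-\alpha}$. For the second factor I split at $t=0$ and use the distribution--function formula for the Lorentz norm (the $r=1$ case of the proposition in Section~\ref{preliminaries}) together with the monotone bounds $|\{u>t\}|\leq|\{|u|>t\}|$ and $|\{u\leq -t\}|\leq|\{|u|>t\}|$ for $t>0$ to recognize it as a constant multiple of $\|u\|_{L^{d/(d-1),1}}^\alpha$. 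The point requiring most care is ensuring Lemma~\ref{isoperimetricinequality} is only applied to finite-measure sets, which is precisely why I must pass to the complement at negative levels; Minkowski's inequality is likewise justified only because $d/(d-\alpha)>1$. With those two checks in place, all remaining steps are routine applications of tools already recorded in Sections~\ref{preliminaries} and~\ref{three}.
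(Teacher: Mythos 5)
Your proof is correct and reaches the same endpoint by the same core mechanism used in the paper: layer-cake decomposition, Minkowski's integral inequality, Lemma~\ref{isoperimetricinequality} on each slice, H\"older's inequality in the level parameter, and the coarea formula. The route differs in a way worth noting: the paper first reduces to $u\geq 0$ via the $u^+-u^-$ decomposition together with $\mathcal{D}^{1-\alpha}(u)\leq\mathcal{D}^{1-\alpha}(u^+)+\mathcal{D}^{1-\alpha}(u^-)$, and then to $u\in W^{1,1}(\mathbb{R}^d)$ by strict density plus Fatou, whereas you take the full layer-cake over $t\in(-\infty,\infty)$ for $u\in BV(\mathbb{R}^d)$ directly, passing to the complement $\{u\leq t\}$ at negative levels so that Lemma~\ref{isoperimetricinequality} is always applied to a finite-measure set. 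The two observations needed to justify this---the symmetry $\mathcal{D}^{1-\alpha}(\chi_E)=\mathcal{D}^{1-\alpha}(\chi_{E^c})$, together with the facts that $Per(\{u\leq t\})=Per(\{u>t\})$ and $|\{u\leq t\}|<\infty$ for $t<0$ by Chebyshev---are exactly right, and they buy you a proof that works on $BV$ without any approximation or sign reduction; the only cost is an extra factor of $2$ after splitting the $t$-integral at $0$ and dominating by the distribution function of $|u|$, which the constant absorbs. One small imprecision: $\{u>0\}$ need not have finite measure, contrary to your parenthetical, but $t=0$ is a null set in the level integral so this is harmless.
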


\begin{proof}
We claim that it suffices to prove the inequality for $u \in W^{1,1}(\mathbb{R}^d)$, $u \geq 0$.  To see this, suppose we have proven the inequality for such $u$.  Then utilizing the usual decomposition of a function by its positive and negative parts, $u=u^+-u^-$, we have $\mathcal{D}^{1-\alpha}(u) \leq \mathcal{D}^{1-\alpha}(u^+)+\mathcal{D}^{1-\alpha}(u^-)$. In particular the claimed inequality and the triangle inequality would then imply
\begin{align*}
 \left\| \mathcal{D}^{1-\alpha}(u) \right\|_{L^{d/(d-\alpha),1}(\mathbb{R}^d)} &\leq  \left\| \mathcal{D}^{1-\alpha}(u^+) \right\|_{L^{d/(d-\alpha),1}(\mathbb{R}^d)} +  \left\| \mathcal{D}^{1-\alpha}(u^-) \right\|_{L^{d/(d-\alpha),1}(\mathbb{R}^d)} \\
 &\leq C\|\nabla u^+ \|^{1-\alpha}_{L^1(\mathbb{R}^d;\mathbb{R}^d)}\|u^+\|_{L^{d/(d-\alpha),1}(\mathbb{R}^d)} \\
 &\;\;+  C\|\nabla u^- \|^{1-\alpha}_{L^1(\mathbb{R}^d;\mathbb{R}^d)}\|u^-\|_{L^{d/(d-\alpha),1}(\mathbb{R}^d)}.
\end{align*}
But then one deduces the result for any $u \in W^{1,1}(\mathbb{R}^d)$, up to a slightly larger constant, by the observations
\begin{align*}
\|\nabla u^+ \|_{L^1(\mathbb{R}^d;\mathbb{R}^d)} &\leq \|\nabla u \|_{L^1(\mathbb{R}^d;\mathbb{R}^d)} \\ 
 \|\nabla u^- \|_{L^1(\mathbb{R}^d;\mathbb{R}^d)} &\leq \|\nabla u \|_{L^1(\mathbb{R}^d;\mathbb{R}^d)} \\ 
 \|u^+\|_{L^{d/(d-\alpha),1}(\mathbb{R}^d)} & \leq  \|u\|_{L^{d/(d-\alpha),1}(\mathbb{R}^d)} \\
 \|u^-\|_{L^{d/(d-\alpha),1}(\mathbb{R}^d)} & \leq \|u\|_{L^{d/(d-\alpha),1}(\mathbb{R}^d)} .
\end{align*}
Finally, once we have established the result for $u \in W^{1,1}(\mathbb{R}^d)$, the result for $u \in BV(\mathbb{R}^d)$ follows by density in the strict topology, and using a pointwise convergence and Fatou's lemma to pass the limit for the left-hand-side.

Therefore we restrict our consideration to the case $u \in W^{1,1}(\mathbb{R}^d)$, $u \geq 0$.  Let $E_t$ denote the set $\{ u>t\}$.  Then we can express
\begin{align*}
\mathcal{D}^{1-\alpha}(u) &= \int_{\mathbb{R}^d} \frac{\left| \int_0^\infty \chi_{E_t}(x)- \chi_{E_t}(y) \;dt\right|}{|x-y|^{d+1-\alpha}}\;dy \\
&\leq \int_0^\infty \int_{\mathbb{R}^d} \frac{| \chi_{E_t}(x)- \chi_{E_t}(y)|}{|x-y|^{d+1-\alpha}}\;dydt \\
&= \int_0^\infty \mathcal{D}^{1-\alpha}(\chi_{E_t})(x) \;dt 
\end{align*}
With this equality noted, first an application of Minkowski's inequality for integrals and then an application Lemma \ref{isoperimetricinequality} yields the inequality
\begin{align*}
\left\| \int_0^\infty \mathcal{D}^{1-\alpha}(\chi_{E_t}) \;dt\right\|_{L^{d/(d-\alpha),1}(\mathbb{R}^d)}  &\leq \int_0^\infty \left\| \mathcal{D}^{1-\alpha}(\chi_{E_t}) \right\|_{L^{d/(d-\alpha),1}(\mathbb{R}^d)} \;dt\\
&\leq \int_0^\infty C^\prime Per(E_t)^{1-\alpha} |E_t|^{\alpha(1-1/d)}\;dt.
\end{align*}
But now H\"older's inequality for the integral in $t$ with exponents 
\begin{align*}
\frac{1}{1/(1-\alpha)} + \frac{1}{1/\alpha} = 1
\end{align*}
leads us to conclude
\begin{align*}
 \left\| \mathcal{D}^{1-\alpha}(u) \right\|_{L^{d/(d-\alpha),1}(\mathbb{R}^d)}  \leq C^\prime \left(\int_0^\infty Per(E_t)\;dt\right)^{1-\alpha} \left(\int_0^\infty |E_t|^{1-1/d} \;dt\right)^{\alpha}.
\end{align*}
Finally, by the coarea formula and the definition of the Lorentz space given in Definition 2.3 we have 
\begin{align*}
 \int_0^\infty Per(E_t)\;dt &=  \int_{\mathbb{R}^d} |\nabla u| \\
\int_0^\infty |E_t|^{1-1/d} \;dt &= ||| u |||_{L^{d/(d-1),1}(\mathbb{R}^d)},
\end{align*}
which implies the desired result.
\end{proof}

We next prove Theorem \ref{interpolationinequality}, which follows easily from Theorem \ref{interpolationinequalityprime} and can then be used to deduce Theorem \ref{mainresult}.

\begin{proof} [Proof of Theorem \ref{interpolationinequality}]
Let $u \in BV(\mathbb{R}^d)$ and by a standard approximation argument we may find $\{u_n\} \subset C^\infty_c(\mathbb{R}^d)$ that converges strictly to $u$.  For such $u_n$ we may integrate by parts to obtain
\begin{align*}
|I_\alpha \nabla u_n(x)| &= \frac{1}{\gamma(\alpha)}\left| \int_{\mathbb{R}^d} \frac{\nabla u_n(y)}{|x-y|^{d-\alpha}}\;dy \right|\\
&= \frac{d-\alpha}{\gamma(\alpha)} \left| \int_{\mathbb{R}^d} \frac{u_n(x)-u_n(y)}{|x-y|^{d+1-\alpha}}\frac{x-y}{|x-y|}\;dy\right| \\
&\leq  \frac{d-\alpha}{\gamma(\alpha)} \mathcal{D}^{1-\alpha}(u_n).
\end{align*}
This inequality and Theorem \ref{interpolationinequalityprime} thus imply
\begin{align*}
\| I_\alpha \nabla u_n \|_{L^{d/(d-\alpha),1}(\mathbb{R}^d;\mathbb{R}^d)} \leq C \|\nabla u_n\|^{1-\alpha}_{L^1(\mathbb{R}^d;\mathbb{R}^d)} \|u_n\|_{L^{d/(d-1),1}(\mathbb{R}^d)}^\alpha,
\end{align*}
and since 
\begin{align*}
\|\nabla u_n\|^{1-\alpha}_{L^1(\mathbb{R}^d;\mathbb{R}^d)} &\to \|D u\|^{1-\alpha}_{L^1(\mathbb{R}^d;\mathbb{R}^d)} \\
\|u_n\|_{L^{d/(d-1),1}(\mathbb{R}^d)}^\alpha &\to \|u\|_{L^{d/(d-1),1}(\mathbb{R}^d)}^\alpha,
\end{align*}
as $n \to \infty$, it suffices to show the inequality
\begin{align*}
\| I_\alpha D u \|_{L^{d/(d-\alpha),1}(\mathbb{R}^d;\mathbb{R}^d)} \leq \liminf_{n \to \infty} C\| I_\alpha \nabla u_n \|_{L^{d/(d-\alpha),1}(\mathbb{R}^d;\mathbb{R}^d)}.
\end{align*}
However, for any $j = 1 \ldots d$ and any $\varphi \in C_c(\mathbb{R}^d), \|\varphi\|_{L^{d/\alpha,\infty}(\mathbb{R}^d)}\leq 1$ we have
\begin{align*}
 \left|\int_{\mathbb{R}^d} I_\alpha \frac{\partial u_n}{\partial x_j} \varphi  \right| \leq  \| I_\alpha \nabla u_n \|_{L^{d/(d-\alpha),1}(\mathbb{R}^d;\mathbb{R}^d)}.
\end{align*}
We will manipulate the left-hand-side to a suitable form to pass the limit in this inequality.  First, an application of Fubini's theorem yields the equality
\begin{align*}
  \left|\int_{\mathbb{R}^d} I_\alpha \frac{\partial u_n}{\partial x_j}\; \varphi  \right| = \left|\int_{\mathbb{R}^d}  \frac{\partial u_n}{\partial x_j} I_\alpha\varphi  \right|.
\end{align*}
Next the fact that $\varphi \in C_c(\mathbb{R}^d)$ implies that $I_\alpha \varphi \in C_0(\mathbb{R}^d)$, and so the weak convergence $\nabla u_n \overset{*}{\rightharpoonup} Du$ yields
\begin{align*}
\lim_{n \to \infty} \int_{\mathbb{R}^d}  \frac{\partial u_n}{\partial x_j} \;I_\alpha\varphi  = \int_{\mathbb{R}^d} I_\alpha\varphi \;d(Du)_j.
\end{align*}
Then another application of Fubini's theorem yields
\begin{align*}
 \int_{\mathbb{R}^d} I_\alpha\varphi \;d(Du)_j =  \int_{\mathbb{R}^d} \varphi \;I_\alpha (Du)_j.
\end{align*}
Putting these several steps together we see that for any $j=1 \ldots d$ we have
\begin{align*}
\left| \int_{\mathbb{R}^d} \varphi \;I_\alpha (Du)_j  \right|\leq \liminf_{n \to \infty} \| I_\alpha \nabla u_n \|_{L^{d/(d-\alpha),1}(\mathbb{R}^d;\mathbb{R}^d)}.
\end{align*}
We now utilize the fact that $\varphi \in C_c(\mathbb{R}^d)$ are dense in the weak topology of $L^{d/\alpha,\infty}(\mathbb{R}^d)$ (though not the norm topology!) to recover the norm in $L^{d/(d-\alpha),1}(\mathbb{R}^d)$:
\begin{align*}
\left\| I_\alpha (Du)_j \right\|_{L^{d/(d-\alpha),1}(\mathbb{R}^d)} = \sup_{\varphi \in C_c(\mathbb{R}^d), \|\varphi\|_{L^{d/\alpha,\infty}(\mathbb{R}^d)}\leq 1} \int_{\mathbb{R}^d} I_\alpha (Du)_j\;\varphi.
\end{align*}
Thus we have shown
\begin{align*}
\left\| I_\alpha (Du)_j \right\|_{L^{d/(d-\alpha),1}(\mathbb{R}^d)} \leq C \|\nabla u\|^{1-\alpha}_{L^1(\mathbb{R}^d;\mathbb{R}^d)} \|u\|_{L^{d/(d-1),1}(\mathbb{R}^d)}^\alpha,
\end{align*}
for all $u \in BV(\mathbb{R}^d)$, and the claim follows by summing the components $(Du)_j$ and using the equivalence of norms in finite dimensions.
\end{proof}

We now prove Theorem \ref{mainresult}.

\begin{proof}[Proof of Theorem \ref{mainresult}]
By Lemma 1 of \cite{BonamiPoornima}, the conditions $F \in L^1(\mathbb{R}^d;\mathbb{R}^d)$ and $\operatorname*{curl}F=0$ imply that we may find a sequence $\{u_n\} \subset C^\infty_c(\mathbb{R}^d)$ such that $\nabla u_n \to F$ in $L^1(\mathbb{R}^d;\mathbb{R}^d)$.  The inequality proven in Theorem \ref{interpolationinequality} implies
\begin{align*}
\left\| I_\alpha \nabla u_n \right\|_{L^{d/(d-\alpha),1}(\mathbb{R}^d)} \leq C \|\nabla u_n\|^{1-\alpha}_{L^1(\mathbb{R}^d;\mathbb{R}^d)} \|u_n\|_{L^{d/(d-1),1}(\mathbb{R}^d)}^\alpha,
\end{align*}
which combined with Alvino's Lorentz space inequality \cite{Alvino} yields
\begin{align*}
\left\| I_\alpha \nabla u_n \right\|_{L^{d/(d-\alpha),1}(\mathbb{R}^d)} \leq C \|\nabla u_n\|_{L^1(\mathbb{R}^d;\mathbb{R}^d)}.
\end{align*}
Finally, the convergence $\nabla u_n \to F$ in $L^1(\mathbb{R}^d;\mathbb{R}^d)$ is sufficient to pass the limit on the right-hand-side, while for the left-hand-side we may repeat the argument at the end of Theorem \ref{interpolationinequality} utilizing Fubini's theorem and the weak convergence to conclude the desired result.
\end{proof}

We next prove Theorem \ref{ineq:hardy}.

\begin{proof}
We first prove an analogue of Gagliardo and Nirenberg's inequality between a function and its (fractional) gradient, from which we can easily deduce the desired result.  Thus, let $u$ be such that $D^\alpha u = I_{1-\alpha} \nabla u \in L^1(\mathbb{R}^d;\mathbb{R}^d)$.  Then as $\operatorname*{curl}D^\alpha u=0$, by Theorem \ref{mainresult} we have
\begin{align*}
\|I_\alpha D^\alpha u \|_{L^{d/(d-\alpha),1}(\mathbb{R}^d;\mathbb{R}^d)} \leq C\|D^\alpha u \|_{L^1(\mathbb{R}^d;\mathbb{R}^d)}.
\end{align*}
Now the semi-group property of the Riesz potentials and transforms implies that if $u$ is suitably regular
\begin{align*}
I_\alpha D^\alpha u =  I_1 \nabla u=Ru.
\end{align*}
In particular, in this case the boundedness of $R_j:L^{d/(d-\alpha),1}(\mathbb{R}^d) \to L^{d/(d-\alpha),1}(\mathbb{R}^d)$ implies
\begin{align*}
\|u \|_{L^{d/(d-\alpha),1}(\mathbb{R}^d)} \leq C'\|D^\alpha u \|_{L^1(\mathbb{R}^d;\mathbb{R}^d)},
\end{align*}
which is the desired inequality for sufficiently regular functions.  The case of general functions follows easily here by again invoking Bonami and Poornima's approximation argument \cite{BonamiPoornima}.  Finally, the claimed Hardy inequality follows easily from H\"older's inequality in the Lorentz spaces, as
\begin{align*}
 \int_{\mathbb{R}^d} \frac{|u|\;}{|x|^{\alpha}}\;dx \leq \|u\|_{L^{d/(d-\alpha),1}(\mathbb{R}^d)} \left\| \frac{1}{|\cdot|^\alpha}\right\|_{L^{d/\alpha,\infty}(\mathbb{R}^d)},
 \end{align*}
 and 
 \begin{align*}
 \left\| \frac{1}{|\cdot|^\alpha}\right\|_{L^{d/\alpha,\infty}(\mathbb{R}^d)} \leq C''.
\end{align*}
\end{proof}

Finally, we conclude with a proof of the dual result claimed in the introduction.

\begin{proof}[Proof of Corollary \ref{dual}]
Define the space
\begin{align*}
X := \left\{ f \in \mathcal{D}^\prime(\mathbb{R}^d) : Rf \in L^1(\mathbb{R}^d;\mathbb{R}^d)\right\},
\end{align*}
which we equip with the norm
\begin{align*}
\|f\|_X := \|Rf \|_{L^1(\mathbb{R}^d;\mathbb{R}^d)}.
\end{align*}
Then we can identify the topological dual of $X$, $X^\prime$, with
\begin{align*}
X^\prime = \left\{ g \in \mathcal{D}^\prime(\mathbb{R}^d) : g = \sum_{j=1}^d R_j Y_j \text{ for some } \{Y_j\}_{j=1}^d \subset L^\infty(\mathbb{R}^d)\right\},
\end{align*}
where
\begin{align*}
\|g\|_{X^\prime} = \inf \left\{ \||Y|\|_{L^\infty(\mathbb{R}^d)}  : g = \sum_{j=1}^d R_j Y_j \text{ for some } \{Y_j\}_{j=1}^d \subset L^\infty(\mathbb{R}^d)\right\}. 
\end{align*}

Thus it suffices to show the estimate
\begin{align*}
\|I_\alpha g \|_{X^\prime} \leq C \|g\|_{L^{d/\alpha,\infty}(\mathbb{R}^d)}.
\end{align*}
However this follows directly by the standard duality argument.  In particular, we have
\begin{align*}
\| I_\alpha g \|_{X^\prime} =  \sup_{f} \int_{\mathbb{R}^d} I_\alpha g f\;dx
\end{align*}
where the supremum is taken over all functions $f \in X,  \|f\|_{X} \leq 1$.  However, now the fact that the Riesz potential is (up to a minus sign) self-adjoint and the introduction of the Riesz transforms $R$ yields the equality
\begin{align*}
\int_{\mathbb{R}^d} I_\alpha g f\;dx = -\int_{\mathbb{R}^d} Rg \cdot I_\alpha Rf\;dx.
\end{align*}
But $\operatorname*{curl} Rf=0$, and thus Theorem \ref{mainresult}, along with the boundedness of the Riesz transforms on $L^{d/\alpha,\infty}(\mathbb{R}^d)$ yields the inequality
\begin{align*}
\left|\int_{\mathbb{R}^d} Rg \cdot I_\alpha Rf\;dx \right| &\leq \|I_\alpha Rf\|_{L^{d/(d-\alpha),1}(\mathbb{R}^d)} \|Rg\|_{L^{d/\alpha,\infty}(\mathbb{R}^d;\mathbb{R}^d)}  \\
&\leq C \|Rf\|_{L^1(\mathbb{R}^d;\mathbb{R}^d)}  \|g\|_{L^{d/\alpha,\infty}(\mathbb{R}^d)}\\
&= C \|f\|_{X} \|g\|_{L^{d/\alpha,\infty}(\mathbb{R}^d)},
\end{align*}
which shows that for $g \in L^{d/\alpha,\infty}(\mathbb{R}^d)$, $I_\alpha g \in X^\prime$ with the desired norm bound.
\end{proof}


\section*{Acknowledgements}
The author would like to thank Vladimir Maz'ya, Chun-Yen Shen, and Shiah-Sen Wang for the stimulating conversations during the undertaking of the this research, Armin Schikorra and Jean Van Schaftingen for their reading of and comments on preliminary versions of this manuscript, and Aline Bonami and Mario Milman for discussions regarding the optimal Lorentz estimate for functions in the Hardy space $\mathcal{H}^1(\mathbb{R}^d)$.  Needless to say that I remain responsible for the remaining shortcomings.  The author is supported in part by the Taiwan Ministry of Science and Technology under research grants 105-2115-M-009-004-MY2, 107-2918-I-009-003 and 107-2115-M-009-002-MY2.

\begin{bibdiv}

\begin{biblist}

\bib{Alvino}{article}{
   author={Alvino, Angelo},
   title={Sulla diseguaglianza di Sobolev in spazi di Lorentz},
   journal={Boll. Un. Mat. Ital. A (5)},
   volume={14},
   date={1977},
   number={1},
   pages={148--156},
}

\bib{AFP}{book}{
   author={Ambrosio, Luigi},
   author={Fusco, Nicola},
   author={Pallara, Diego},
   title={Functions of bounded variation and free discontinuity problems},
   series={Oxford Mathematical Monographs},
   publisher={The Clarendon Press, Oxford University Press, New York},
   date={2000},
   pages={xviii+434},
}

\bib{BonamiPoornima}{article}{
   author={Bonami, A.},
   author={Poornima, S.},
   title={Nonmultipliers of the Sobolev spaces $W^{k,1}({\bf R}^n)$},
   journal={J. Funct. Anal.},
   volume={71},
   date={1987},
   number={1},
   pages={175--181},
}

\bib{BourgainBrezis2002}{article}{
   author={Bourgain, Jean},
  author={Brezis, Ha{\"{\i}}m},
  title={Sur l'\'equation ${\rm div}\,u=f$},
   journal={C. R. Math. Acad. Sci. Paris},
   volume={334},
   date={2002},
   number={11},
   pages={973--976},
}

\bib{BourgainBrezis2003}{article}{
 author={Bourgain, Jean},
  author={Brezis, Ha{\"{\i}}m},
  title={On the equation ${\rm div}\, Y=f$ and application to control of
   phases},
   journal={J. Amer. Math. Soc.},
   volume={16},
   date={2003},
   number={2},
   pages={393--426},
  }

\bib{BourgainBrezis2004}{article}{
  author={Bourgain, Jean},
  author={Brezis, Ha{\"{\i}}m},
  title={New estimates for the Laplacian, the div-curl, and related Hodge
  systems},
  journal={C. R. Math. Acad. Sci. Paris},
   volume={338},
  date={2004},
  number={7},
  pages={539--543},
}

\bib{BourgainBrezis2007}{article}{
   author={Bourgain, Jean},
   author={Brezis, Ha{\"{\i}}m},
   title={New estimates for elliptic equations and Hodge type systems},
   journal={J. Eur. Math. Soc. (JEMS)},
   volume={9},
  date={2007},
  number={2},
   pages={277--315},
  }

\bib{BourgainBrezisMironescu}{article}{
   author={Bourgain, Jean},
   author={Brezis, Haim},
   author={Mironescu, Petru},
   title={$H^{1/2}$ maps with values into the circle: minimal
   connections, lifting, and the Ginzburg-Landau equation},
   journal={Publ. Math. Inst. Hautes \'Etudes Sci.},
   number={99},
   date={2004},
   pages={1--115},
}

\bib{BousquetMironescuRuss2013}{article}{
   author={Bousquet, Pierre},
   author={Mironescu, Petru},
   author={Russ, Emmanuel},
  title={A limiting case for the divergence equation},
   journal={Math. Z.},
   volume={274},
   date={2013},
   number={1-2},
   pages={427--460},
}

\bib{ChanilloVanSchaftingenYung}{article}{
   author={Chanillo, Sagun},
   author={Van Schaftingen, Jean},
   author={Yung, Po-Lam},
   title={Bourgain-Brezis inequalities on symmetric spaces of non-compact
   type},
   journal={J. Funct. Anal.},
   volume={273},
   date={2017},
   number={4},
   pages={1504--1547},
   issn={0022-1236},
}

\bib{CDDD}{article}{
   author={Cohen, Albert},
   author={Dahmen, Wolfgang},
   author={Daubechies, Ingrid},
   author={DeVore, Ronald},
   title={Harmonic analysis of the space BV},
   journal={Rev. Mat. Iberoamericana},
   volume={19},
   date={2003},
   number={1},
   pages={235--263},
}

\bib{Coifman}{article}{
   author={Coifman, Ronald R.},
   title={A real variable characterization of $H^{p}$},
   journal={Studia Math.},
   volume={51},
   date={1974},
   pages={269--274},
   issn={0039-3223},
}

\bib{FedererFleming}{article}{
   author={Federer, Herbert},
   author={Fleming, Wendell H.},
   title={Normal and integral currents},
   journal={Ann. of Math. (2)},
   volume={72},
   date={1960},
   pages={458--520},
   issn={0003-486X},
}

\bib{FeffermanRiviereSagher}{article}{
   author={Fefferman, C.},
   author={Rivi\`ere, N. M.},
   author={Sagher, Y.},
   title={Interpolation between $H^{p}$ spaces: the real method},
   journal={Trans. Amer. Math. Soc.},
   volume={191},
   date={1974},
   pages={75--81},
   issn={0002-9947},
}

\bib{FeffermanStein}{article}{
   author={Fefferman, C.},
   author={Stein, E. M.},
   title={$H^{p}$ spaces of several variables},
   journal={Acta Math.},
   volume={129},
   date={1972},
   number={3-4},
   pages={137--193},
   issn={0001-5962},
}

\bib{GargSpector}{article}{
   author={Garg, Rahul},
   author={Spector, Daniel},
   title={On the regularity of solutions to Poisson's equation},
   journal={C. R. Math. Acad. Sci. Paris},
   volume={353},
   date={2015},
   number={9},
   pages={819--823},
   issn={1631-073X},
}

\bib{grafakos}{book}{
   author={Grafakos, Loukas},
   title={Classical Fourier analysis},
   series={Graduate Texts in Mathematics},
   volume={249},
   edition={3},
   publisher={Springer, New York},
   date={2014},
   pages={xviii+638},
}

\bib{Hedberg}{article}{
   author={Hedberg, Lars Inge},
   title={On certain convolution inequalities},
   journal={Proc. Amer. Math. Soc.},
   volume={36},
   date={1972},
   pages={505--510},
   issn={0002-9939},
}

\bib{Hunt}{article}{
   author={Hunt, Richard A.},
   title={On $L(p,\,q)$ spaces},
   journal={Enseignement Math. (2)},
   volume={12},
   date={1966},
   pages={249--276},
}

\bib{JohnNirenberg}{article}{
   author={John, F.},
   author={Nirenberg, L.},
   title={On functions of bounded mean oscillation},
   journal={Comm. Pure Appl. Math.},
   volume={14},
   date={1961},
   pages={415--426},
}

 \bib{Kolyada}{article}{
   author={Kolyada, V. I.},
   title={On the embedding of Sobolev spaces},
   language={Russian},
   journal={Mat. Zametki},
   volume={54},
   date={1993},
   number={3},
   pages={48--71, 158},
   translation={
      journal={Math. Notes},
      volume={54},
      date={1993},
      number={3-4},
      pages={908--922 (1994)},
   },
}

\bib{LanzaniStein2005}{article}{
   author={Lanzani, Loredana},
   author={Stein, Elias M.},
   title={A note on div curl inequalities},
   journal={Math. Res. Lett.},
   volume={12},
   date={2005},
   number={1},
   pages={57--61},
}

\bib{Latter}{article}{
   author={Latter, Robert H.},
   title={A characterization of $H^{p}({\bf R}^{n})$ in terms of atoms},
   journal={Studia Math.},
   volume={62},
   date={1978},
   number={1},
   pages={93--101},
   issn={0039-3223},
}

\bib{mazya1960}{article}{
   author={Maz\cprime ya, Vladimir},
   title={Classes of domains and imbedding theorems for function spaces},
   journal={Soviet Math. Dokl.},
   volume={1},
   date={1960},
   pages={882--885},
}

\bib{mazya}{article}{
   author={Maz\cprime ya, Vladimir},
   title={Lectures on isoperimetric and isocapacitary inequalities in the
   theory of Sobolev spaces},
   conference={
      title={Heat kernels and analysis on manifolds, graphs, and metric
      spaces},
      address={Paris},
      date={2002},
   },
   book={
      series={Contemp. Math.},
      volume={338},
      publisher={Amer. Math. Soc., Providence, RI},
   },
   date={2003},
   pages={307--340},
}

\bib{Mazya:2011}{book}{
   author={Maz\cprime ya, Vladimir},
   title={Sobolev spaces with applications to elliptic partial differential
   equations},
   series={Grundlehren der Mathematischen Wissenschaften [Fundamental
   Principles of Mathematical Sciences]},
   volume={342},
   edition={Second, revised and augmented edition},
   publisher={Springer, Heidelberg},
   date={2011},
   pages={xxviii+866},
}

\bib{mazyashaposhnikova}{article}{
   author={Maz\cprime ya, V.},
   author={Shaposhnikova, T.},
   title={An elementary proof of the Brezis and Mironescu theorem on the
   composition operator in fractional Sobolev spaces},
   journal={J. Evol. Equ.},
   volume={2},
   date={2002},
   number={1},
   pages={113--125},
}

\bib{oneil}{article}{
   author={O'Neil, Richard},
   title={Convolution operators and $L(p,\,q)$ spaces},
   journal={Duke Math. J.},
   volume={30},
   date={1963},
   pages={129--142},
}

\bib{SSVS}{article}{
   author={Schikorra, Armin},
   author={Spector, Daniel},
   author={Van Schaftingen, Jean},
   title={An $L^1$-type estimate for Riesz potentials},
   journal={Rev. Mat. Iberoam.},
   volume={33},
   date={2017},
   number={1},
   pages={291--303},
}

\bib{Shieh-Spector-2018}{article}{
   author={Shieh, Tien-Tsan},
   author={Spector, Daniel E.},
   title={On a new class of fractional partial differential equations II},
   journal={Adv. Calc. Var.},
   volume={11},
   date={2018},
   number={3},
   pages={289--307},
}

\bib{sobolev}{article}{
   author={Sobolev, S.L.},
    title={On a theorem of functional analysis},
   journal={Mat. Sb.},
   volume={4},
   number={46},
  year={1938},
  language={Russian},
   pages={471-497},
   translation={
      journal={Transl. Amer. Math. Soc.},
      volume={34},
     date={},
      pages={39-68},
   },
   }

\bib{Solonnikov1975}{article}{
   author={Solonnikov, V.A.},
    title={Inequalities for functions of the classes $\dot{W}^{\mathbf{m}}_p(\mathbb{R}^n)$},
   journal={Zapiski Nauchnykh Seminarov Leningradskogo Otdeleniya Matematicheskogo Instituta im. V. A. Steklova Akademii Nauk SSSR},
   number={27},
   year={1972},
  language={Russian},
   pages={194-210},
   translation={
      journal={J. Sov. Math.},
      volume={3},
      date={1975},
      pages={ 549-564},
   },
   }

\bib{Sharmonic}{book}{
   author={Stein, Elias M.},
   title={Singular integrals and differentiability properties of functions},
   series={Princeton Mathematical Series, No. 30},
   publisher={Princeton University Press, Princeton, N.J.},
   date={1970},
   pages={xiv+290},
}

\bib{SteinWeiss}{article}{
   author={Stein, Elias M.},
   author={Weiss, Guido},
   title={On the theory of harmonic functions of several variables. I. The
   theory of $H^{p}$-spaces},
   journal={Acta Math.},
   volume={103},
   date={1960},
   pages={25--62},
   issn={0001-5962},
}

\bib{tartar}{article}{
   author={Tartar, Luc},
   title={Notes from Sunday October 31, 2004},
   journal={unpublished},
   volume={},
   date={2004},
   pages={},
   issn={},
}

\bib{VanSchaftingen2013}{article}{
   author={Van Schaftingen, Jean},
   title={Limiting Sobolev inequalities for vector fields and canceling
   linear differential operators},
   journal={J. Eur. Math. Soc. (JEMS)},
   volume={15},
   date={2013},
   number={3},
   pages={877--921},
}

\bib{zygmund}{article}{
   author={Zygmund, A.},
   title={On a theorem of Marcinkiewicz concerning interpolation of
   operations},
   journal={J. Math. Pures Appl. (9)},
   volume={35},
   date={1956},
   pages={223--248},
}

\end{biblist}
	
\end{bibdiv}


\end{document}